\pgfplotsset{compat=1.18}
\title{Prediction Suboptimality of the Lasso in Sparse Linear Regression}
\author[1]{Guo Liu}
\affil[1]{Department of Pure and Applied Mathematics, Waseda University}
\numberwithin{equation}{section}
\newtheorem{theorem}{Theorem}[section]
\newtheorem{corollary}[theorem]{Corollary}
\newtheorem{lemma}[theorem]{Lemma}
\newtheorem{proposition}[theorem]{Proposition}
\theoremstyle{definition}
\newtheorem{definition}[theorem]{Definition}
\newtheorem{assumption}[theorem]{Assumption}
\newtheorem{remark}[theorem]{Remark}
\begin{document}

	\maketitle

	\begin{abstract}
		The choice of the tuning parameter in the Lasso is central to its statistical performance in high-dimensional linear regression. In this work, we study tuning regimes under which the Lasso exhibits suboptimal prediction performance, in the sense that a simple refinement improves upon it both on high-probability events and in mean squared prediction error. Our analysis shows that the relevant stochastic scale is governed by Gaussian maxima on the selected or localized support, which may be more informative than the universal rate in Lasso theory. We further illustrate how structural factors in the design matrix can influence the suboptimality phenomenon and discuss extensions to other estimators and more general noise structures.
	\end{abstract}
	
	\section{Introduction}
	\label{sec:introduction}

	Linear regression is a simple yet powerful framework for high-dimensional modeling, valued for its interpretability and its transparent representation of signal and noise. The Lasso \cite{tibshirani1996regression} is one of the most widely used methods for high-dimensional linear regression. By combining estimation and variable selection through an \(\ell_1\) penalty, it yields computationally efficient estimators with well-developed theoretical guarantees. A large body of work has established oracle inequalities, prediction error bounds, and sparsity recovery results under various assumptions; see, for example, \cite{geer2016estimation} and the references therein. Within this literature, it is common to select the tuning parameter at the universal scale \(\sigma\sqrt{(\log p)/n}\), which ensures favorable asymptotic and non-asymptotic guarantees in high-dimensional settings.
	
	More recently, it has been observed that the choice of the tuning parameter can be more delicate, particularly when the design matrix exhibits strong correlations. For instance, related results in \cite{vandegeer2013lasso,hebiri2013correlations} indicate that stronger correlations in the design call for smaller tuning parameters in order to obtain tighter guarantees. In a related direction, \cite{dalalyan2017prediction} consider new measures of design correlation and establish refined Lasso oracle inequalities within this framework. From a different perspective, \cite{bellec2018slope} show that adaptively chosen tuning parameters for the Lasso can yield sharp oracle inequalities and improved minimax prediction rates under suitable design conditions. Taken together, these results suggest that tuning parameters below the universal Lasso scale of order \(\sigma\sqrt{(\log p)/n}\) may be advantageous in certain regimes. These works primarily focus on refined risk upper bounds and therefore do not fully address this aspect of the problem. We are instead interested in whether, and under what structural conditions, the bias induced by \(\ell_1\)-regularization leads to tuning regimes in which the resulting predictor is suboptimal for prediction.
	
	In this paper, we provide new insights into the prediction and tuning of \(\ell_1\)-regularized high-dimensional regression from the perspective of prediction risk dominance within a non-asymptotic framework. Rather than proposing new tuning rules through refined upper bounds, we characterize regimes in which prediction suboptimality occurs and establish lower bounds on the prediction improvement achieved by a simple refinement. A related refinement framework was introduced and empirically studied in \cite{lassoridge}, where an initial analysis was also provided. 
	
	In the present paper, for the Lasso, we first derive an eventwise lower bound on the prediction error gap and then establish an unconditional lower bound on the corresponding prediction mean squared error difference. Together, these bounds capture the improvement mechanism through the selected Gram matrix and the Gaussian complexity of the selected or localized design. In particular, the expectation lower bound shows that prediction suboptimality can arise when the Lasso tuning parameter is large relative to this stochastic scale. This perspective refines the usual ambient-dimensional scale: modern maximal inequalities and chaining methods can account for dependence and overlap among the relevant Gaussian variables, and in the Lasso case this can reflect geometric redundancy among the columns of the design matrix. Consequently, the universal choice \(\lambda_{\mathrm L}\asymp \sigma\sqrt{(\log p)/n}\) may be unnecessarily large when the localized Gaussian complexity is substantially smaller. Notably, the lower bounds are explicit and contain no hidden constants.
	
	Although the Lasso serves as the primary example, the analysis also suggests possible extensions to other \(\ell_1\)-regularized estimators and to more general noise structures. The main contributions are as follows:
	\begin{enumerate}
		\item A framework for direct comparison of prediction risks, allowing one to assess whether a given Lasso tuning choice admits improvement via a simple refinement.
		\item An eventwise lower bound and an explicit unconditional lower bound for the prediction mean squared error difference, together with several corollaries and related discussions.
		\item A characterization of how structural features, including selected-support geometry, localized stochastic complexity, and support stability, shape the regimes of prediction risk dominance.
	\end{enumerate}

	\section{Preliminaries and Background}
	\label{sec:preliminaries-background}
	
	\subsection{Notation and Model Assumptions}
	\label{subsec:notation-model-assumptions}

	The cardinality of a set \(E\) is denoted by \(|E|\). For a vector \(x\), define  
	\(\|x\|_1 = \sum_i |x_i|\) and \(\|x\|_2 = \bigl(\sum_i x_i^2\bigr)^{1/2}\).
	For a matrix \(A = (A_{ij}) \in \mathbb{R}^{n \times p}\), define
	\begin{align*}
		\|A\|_\infty &= \max_{i} \sum_{j} |A_{ij}|, \\
		\|A\|_2 &= \sup_{\|x\|_2 = 1}\, \|Ax\|_2 .
	\end{align*}
	For any \(S \subseteq [p]\), \(A_S\) denotes the submatrix consisting of the columns of \(A\) indexed by \(S\), \(\beta_S\) denotes the corresponding subvector of \(\beta\), and \(\beta_{-S}\) denotes the subvector of \(\beta\) indexed by the complement \(S^c\).
	We adopt the standard conventions for zero-dimensional vectors and matrices. In particular, a \(0\times n\) matrix and a \(0\times 1\) vector are well defined, and the \(0\times 0\) matrix is treated as a valid square matrix whose inverse is itself. Under these conventions, matrix multiplication involving zero-dimensional objects is well defined and dimensionally consistent.
	We work with the linear model
	\[
	y = X\beta_0 + \varepsilon,
	\]
	where \(y \in \mathbb{R}^n\), \(X \in \mathbb{R}^{n\times p}\), and \(\beta^0 \in \mathbb{R}^p\). The noise vector \(\varepsilon \in \mathbb{R}^n\) is arbitrary and no specific distributional assumption is imposed unless stated otherwise.
	
	\subsection{Lasso and Ridge: Definitions and Basic Properties}
	\label{subsec:lasso-ridge-definitions}
	
	\begin{definition}[Lasso estimator]
		Let \(\lambda_{\mathrm{L}} > 0\) be a regularization parameter. 
		The Lasso estimator \(\hat{\beta}_{\mathrm{L}} \in \mathbb{R}^p\) is defined as
		\[
		\hat{\beta}_{\mathrm{L}}
		\in 
		\arg\min_{\beta \in \mathbb{R}^{p}}
		\left\{
		\frac{1}{2n} \| y - X\beta \|_2^2 
		+ \lambda_{\mathrm{L}} \|\beta \|_1
		\right\}.
		\]
	\end{definition}
	
	\begin{definition}
		We define the key quantities associated with the Lasso:
		\begin{itemize}
			\item \emph{The equicorrelation set:}
			\[
			E \coloneqq \{\, i \in \{1,2,\ldots,p\} : |\frac{1}{n}X_i^\top (y - X\hat{\beta}_{\mathrm{L}})| = \lambda_{\mathrm{L}} \,\}.
			\]
			\item \emph{Restricted Gram matrix:}
			\[
			\Sigma_{n,E} \coloneqq \frac{X_{E}^{\top} X_{E}}{n}.
			\]
			\item \emph{The equicorrelation signs:}
			\[
			s \coloneqq \operatorname{sign}({X_E}^{\top}(y - X\hat{\beta}_{\mathrm{L}})).
			\]
		\end{itemize}
	\end{definition}
	
	\begin{proposition}\label{prop:lasso-kkt}
		The Karush--Kuhn--Tucker (KKT) conditions for the Lasso estimator can be written as follows; see, for example, \cite{geer2016estimation}:
		\[
		\frac{1}{n} X^\top (y - X \hat{\beta}_{\mathrm{L}}) = \lambda_{\mathrm{L}} \bar{z},
		\]
		where \( \bar{z} \in \partial \|\hat{\beta}_{\mathrm{L}}\|_1 \), whose \(i\)-th component is given by
		\[
		\bar{z}_i =
		\begin{cases}
			\mathrm{sign}((\hat{\beta}_{\mathrm{L}})_i), & \text{if } (\hat{\beta}_{\mathrm{L}})_i \neq 0, \\[6pt]
			u_i \in [-1, 1], & \text{if } (\hat{\beta}_{\mathrm{L}})_i = 0,
		\end{cases}
		\qquad \text{for } i = 1, \dots, p.
		\]
		Moreover, it follows that
		\[
		\frac{1}{n} X_E^\top (y - X \hat{\beta}_{\mathrm{L}}) = \lambda_{\mathrm{L}} s.
		\]
		Here, \( \mathrm{sign}(\cdot) \) denotes the sign function. In addition, the KKT
		conditions on the active block imply
		\[
		\hat{\beta}_{\mathrm{L},E}
		= (X_E^{\top}X_E)^{-1}\!\left(X_E^{\top}y - n\lambda_{\mathrm{L}} s\right).
		\]
		Here, \((\hat{\beta}_{\mathrm{L}})_i\) denotes the \(i\)-th component of \(\hat{\beta}_{\mathrm{L}}\), and for an index set \(E\), \(\hat{\beta}_{\mathrm{L},E}\) denotes the subvector restricted to \(E\). 
		The explicit expression for \(\hat{\beta}_{\mathrm{L},E}\) holds when the Lasso solution is unique (see Proposition~\ref{uniqueness}).
	\end{proposition}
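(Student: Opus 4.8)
The plan is to obtain all three assertions from convex first-order optimality, with no probabilistic input. The objective $F(\beta) = \frac{1}{2n}\|y-X\beta\|_2^2 + \lambda_{\mathrm{L}}\|\beta\|_1$ is the sum of a differentiable convex function, with gradient $-\frac1n X^\top(y-X\beta)$, and the convex function $\lambda_{\mathrm{L}}\|\cdot\|_1$, which is finite everywhere; hence the Moreau--Rockafellar sum rule applies without any constraint qualification, and Fermat's rule gives that $\hat\beta_{\mathrm{L}}$ minimizes $F$ if and only if $0 \in -\frac1n X^\top(y-X\hat\beta_{\mathrm{L}}) + \lambda_{\mathrm{L}}\,\partial\|\hat\beta_{\mathrm{L}}\|_1$. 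Writing $\partial\|\cdot\|_1$ coordinatewise in the standard way yields the first displayed identity with $\bar z$ of the asserted form; this is the only place convexity is used.

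For the equicorrelation block, I would simply restrict the stationarity identity to the coordinates $i\in E$. By the definition of $E$ one has $|\frac1n X_i^\top(y-X\hat\beta_{\mathrm{L}})| = \lambda_{\mathrm{L}}$, so $|\bar z_i| = 1$ and therefore $\bar z_i = \operatorname{sign}(\frac1n X_i^\top(y-X\hat\beta_{\mathrm{L}}))$; stacking these equalities over $i\in E$ and reading off the definition of $s$ gives $\frac1n X_E^\top(y-X\hat\beta_{\mathrm{L}}) = \lambda_{\mathrm{L}} s$. When $E=\emptyset$ this is the trivial identity between zero-dimensional vectors, consistent with the conventions fixed in Section~2.

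For the closed form on the active block, the key observation is the inclusion $\{\,i : (\hat\beta_{\mathrm{L}})_i\neq 0\,\}\subseteq E$: if $(\hat\beta_{\mathrm{L}})_i\neq 0$ then $\bar z_i = \operatorname{sign}((\hat\beta_{\mathrm{L}})_i)\in\{-1,+1\}$, whence $|\frac1n X_i^\top(y-X\hat\beta_{\mathrm{L}})| = \lambda_{\mathrm{L}}$ and $i\in E$. Consequently $X\hat\beta_{\mathrm{L}} = X_E\hat\beta_{\mathrm{L},E}$, and substituting into the equicorrelation identity gives the normal equations $X_E^\top X_E\,\hat\beta_{\mathrm{L},E} = X_E^\top y - n\lambda_{\mathrm{L}} s$. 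Under uniqueness of the Lasso solution (Proposition~\ref{uniqueness}) the restricted Gram matrix $\Sigma_{n,E}$, equivalently $X_E^\top X_E$, is invertible, so inverting yields the stated expression for $\hat\beta_{\mathrm{L},E}$.

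The argument is largely bookkeeping; the one genuine subtlety is precisely the last step. In general the columns of $X_E$ may be linearly dependent, so $X_E^\top X_E$ need not be invertible, and the closed form can fail; this is exactly why the uniqueness characterization must be invoked. Absent that, the strongest fully general conclusion is that every Lasso solution satisfies the normal equations $X_E^\top X_E\,\hat\beta_{\mathrm{L},E} = X_E^\top y - n\lambda_{\mathrm{L}} s$, and one should be careful throughout to distinguish the active set of $\hat\beta_{\mathrm{L}}$ from the possibly larger equicorrelation set $E$, using only that the former is contained in the latter.
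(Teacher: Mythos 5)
Your argument is correct and is exactly the standard derivation that the paper itself does not spell out but defers to the cited reference: Fermat's rule with the coordinatewise subdifferential of the $\ell_1$ norm, restriction to the equicorrelation set, and inversion of $X_E^{\top}X_E$ justified by Proposition~\ref{uniqueness} (which gives $\mathrm{null}(X_E)=\{0\}$ under the general-position assumption). Your closing remark correctly isolates the one genuine subtlety --- that the closed form requires invertibility of the restricted Gram matrix, and that the support of $\hat{\beta}_{\mathrm{L}}$ is only contained in $E$ a priori --- which is precisely the caveat the proposition itself flags.
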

	
	\begin{assumption}[Column normalization]\label{ass:normalization}
		The columns of the design matrix \(X \in \mathbb{R}^{n\times p}\) are normalized as
		\[
		\|X_j\|_2^2 = n, \qquad j=1,\dots,p.
		\]
	\end{assumption}
	
	\begin{assumption}\label{ass:general-position}
		The columns of the design matrix \(X\) are in general position.  
		More precisely, for any integer \( k < \min\{n,p\} \), no \( k \)-dimensional affine subspace of \( \mathbb{R}^n \) contains more than \( k+1 \) elements of the signed set \( \{\pm X_1, \ldots, \pm X_p\} \), excluding antipodal pairs. 
		Although the definition may appear somewhat technical, this condition is naturally satisfied when the entries of \(X\) are drawn from a continuous distribution.
	\end{assumption}
	
	\begin{proposition}[Uniqueness of the Lasso]\label{uniqueness}
		Under Assumption~\ref{ass:general-position}, the Lasso solution \(\hat{\beta}_{\mathrm{L}}\) is unique, and its support is exactly \(E\); that is, \(\hat{\beta}_j \neq 0\) if and only if \(j \in E\). Moreover, \(\mathrm{null}(X_E) = \{0\}\); see \cite{tibshirani2013lasso}. 
		
	\end{proposition}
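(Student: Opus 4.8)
The plan is to follow \citep{tibshirani2013lasso} in three steps: (i) reduce the relevant objects to data-determined invariants; (ii) use Assumption~\ref{ass:general-position} to force $X_E$ to have full column rank; (iii) read off uniqueness, the closed form, and the support identity.

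First I would record the standard reduction. Since $z \mapsto \frac{1}{2n}\|y - z\|_2^2$ is strictly convex on $\mathbb{R}^n$, if $\hat\beta^{(1)}$ and $\hat\beta^{(2)}$ were minimizers with $X\hat\beta^{(1)} \neq X\hat\beta^{(2)}$ then the midpoint $\tfrac12(\hat\beta^{(1)} + \hat\beta^{(2)})$ would strictly lower the quadratic term while not increasing $\|\cdot\|_1$, contradicting optimality. Hence $X\hat\beta_{\mathrm{L}}$, the residual $r := y - X\hat\beta_{\mathrm{L}}$, the subgradient $\bar z := \tfrac{1}{n\lambda_{\mathrm{L}}}X^\top r$, the equicorrelation set $E = \{\, j : |\bar z_j| = 1 \,\}$, and the signs $s$ are common to all minimizers. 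By the KKT conditions $\bar z_j = \mathrm{sign}((\hat\beta_{\mathrm{L}})_j)$ when $(\hat\beta_{\mathrm{L}})_j \neq 0$, so any coordinate with $|\bar z_j| < 1$ (i.e.\ $j \notin E$) vanishes; thus every minimizer is supported inside $E$, which already gives $\mathrm{supp}(\hat\beta_{\mathrm{L}}) \subseteq E$. If $E = \emptyset$ the statement is trivial, so assume $E \neq \emptyset$; then $r \neq 0$, because $X_i^\top r = n\lambda_{\mathrm{L}} s_i \neq 0$ for $i \in E$.

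The crux is to show $\mathrm{null}(X_E) = \{0\}$, and this is where Assumption~\ref{ass:general-position} enters. Suppose, for contradiction, that $\{X_i\}_{i \in E}$ is linearly dependent, and choose a minimal dependent sub-collection indexed by $E' \subseteq E$, $|E'| = m$, with coefficients $c_i \neq 0$ satisfying $\sum_{i \in E'} c_i X_i = 0$; minimality gives $\mathrm{rank}(X_{E'}) = m - 1 \le n$. Taking the inner product with $r$ and using $X_i^\top r = n\lambda_{\mathrm{L}} s_i$ yields $\sum_{i \in E'} c_i s_i = 0$; then $d_i := c_i s_i$ are nonzero, sum to zero, and (using $s_i^2 = 1$) satisfy $\sum_{i \in E'} d_i\,(s_i X_i) = 0$, a nontrivial affine dependency among the $m$ points $s_i X_i$. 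These points are pairwise distinct and no two of them are antipodal --- otherwise $X_i = \pm X_j$ for some $i \neq j$, contradicting the minimality of $E'$ (or Assumption~\ref{ass:general-position} directly, when $m = 2$) --- so they form $m$ distinct elements of the signed column set $\{\pm X_1, \dots, \pm X_p\}$. The affine dependency puts these $m$ points in an affine subspace of dimension at most $m - 2$; since $m - 2 \le n - 1 < n$ and $m - 2 \le p - 2 < p$, Assumption~\ref{ass:general-position} allows at most $m - 1$ of them there --- a contradiction. Hence $X_E$ has full column rank.

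It then remains to collect consequences. As $X_E \hat\beta_{\mathrm{L},E} = X\hat\beta_{\mathrm{L}}$ is fixed and $X_E$ is injective, $\hat\beta_{\mathrm{L},E}$ is uniquely determined, and since $\hat\beta_{\mathrm{L}}$ vanishes off $E$ this proves uniqueness of $\hat\beta_{\mathrm{L}}$; solving the active-block KKT identity $X_E^\top(y - X_E\hat\beta_{\mathrm{L},E}) = n\lambda_{\mathrm{L}} s$, now that $X_E^\top X_E$ is invertible, reproduces $\hat\beta_{\mathrm{L},E} = (X_E^\top X_E)^{-1}(X_E^\top y - n\lambda_{\mathrm{L}} s)$. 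For $\mathrm{supp}(\hat\beta_{\mathrm{L}}) = E$ it remains only to establish the reverse inclusion, i.e.\ that no coordinate of this closed-form vector is zero. I expect this last point to be the main obstacle: unlike steps (i)--(ii), which go through for every $(X,y,\lambda_{\mathrm{L}})$, it relies on a mild nondegeneracy of $y$ relative to $X$ and $\lambda_{\mathrm{L}}$; see \citep{tibshirani2013lasso} for the complete argument.
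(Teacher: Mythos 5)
The paper offers no proof of this proposition---it simply defers to \citep{tibshirani2013lasso}---and your proposal is a correct reconstruction of that cited argument: the strict-convexity reduction showing that the fit, residual, subgradient, equicorrelation set and signs are shared by all minimizers, followed by the general-position contradiction in which a minimal linear dependency among $\{X_i\}_{i\in E}$, combined with $X_i^\top r = n\lambda_{\mathrm{L}} s_i$, produces a nontrivial affine dependency forcing $m$ distinct, non-antipodal signed columns into an affine subspace of dimension $m-2$. Your closing caveat is also accurate and worth keeping in mind: general position alone yields $\mathrm{supp}(\hat\beta_{\mathrm{L}}) \subseteq E$ together with uniqueness and the closed form, whereas the exact equality $\mathrm{supp}(\hat\beta_{\mathrm{L}}) = E$ asserted in the proposition holds in \citep{tibshirani2013lasso} only for almost every $y$, so the unqualified statement in the paper is itself marginally stronger than what Assumption~\ref{ass:general-position} delivers on its own.
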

	
	\begin{remark}
		Proposition~\ref{uniqueness} ensures that the definition of the refined estimator is unambiguous. Throughout this paper, we impose Assumptions~\ref{ass:normalization} and~\ref{ass:general-position} without further mention.
	\end{remark}
	
	\begin{proposition}[Ridge regression estimator]\label{prop:ridge-estimator}
		Let \(\lambda^{\mathrm{r}} > 0\) be a regularization parameter. 
		The ridge regression estimator \(\hat{\beta}^{\mathrm{r}} \in \mathbb{R}^p\) 
		\cite{hoerl1970ridge} is defined by
		\[
		\hat{\beta}^{\mathrm{r}}
		= \arg\min_{\beta \in \mathbb{R}^p}
		\left\{
		\frac{1}{2n}\|y - X\beta\|_2^2
		+ \frac{\lambda^{\mathrm{r}}}{2}\|\beta\|_2^2
		\right\}.
		\]
		The Karush--Kuhn--Tucker (KKT) condition is given by
		\[
		-\frac{1}{n} X^\top \bigl(y - X\hat{\beta}^{\mathrm{r}}\bigr)
		+ \lambda^{\mathrm{r}} \hat{\beta}^{\mathrm{r}} = 0,
		\]
		which implies the closed-form solution
		\[
		\hat{\beta}^{\mathrm{r}}
		= \bigl(X^\top X + n\lambda^{\mathrm{r}} I_p \bigr)^{-1}
		X^\top y.
		\]
	\end{proposition}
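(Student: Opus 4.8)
\emph{Proof proposal.} The plan is to treat the ridge objective as an unconstrained, smooth, strictly convex minimization problem and invoke first-order optimality. First I would note that
\[
f(\beta) \coloneqq \frac{1}{2n}\|y - X\beta\|_2^2 + \frac{\lambda^{\mathrm{r}}}{2}\|\beta\|_2^2
\]
is a quadratic function of \(\beta\) whose Hessian is \(\tfrac{1}{n}X^\top X + \lambda^{\mathrm{r}} I_p\). Since \(X^\top X \succeq 0\) and \(\lambda^{\mathrm{r}} > 0\), this Hessian is positive definite, so \(f\) is strictly convex and coercive; hence it admits a unique minimizer, which is characterized by the stationarity condition \(\nabla f(\hat{\beta}^{\mathrm{r}}) = 0\).

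Next I would compute the gradient term by term: \(\nabla_\beta\bigl[\tfrac{1}{2n}\|y - X\beta\|_2^2\bigr] = -\tfrac{1}{n}X^\top(y - X\beta)\) and \(\nabla_\beta\bigl[\tfrac{\lambda^{\mathrm{r}}}{2}\|\beta\|_2^2\bigr] = \lambda^{\mathrm{r}}\beta\). Setting the sum equal to zero at \(\hat{\beta}^{\mathrm{r}}\) yields precisely the stated KKT condition
\[
-\frac{1}{n}X^\top\bigl(y - X\hat{\beta}^{\mathrm{r}}\bigr) + \lambda^{\mathrm{r}}\hat{\beta}^{\mathrm{r}} = 0 .
\]
Rearranging, this is equivalent to \(\bigl(X^\top X + n\lambda^{\mathrm{r}} I_p\bigr)\hat{\beta}^{\mathrm{r}} = X^\top y\). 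The matrix \(X^\top X + n\lambda^{\mathrm{r}} I_p\) is positive definite for the same reason as above (a positive semidefinite matrix plus a positive multiple of the identity), hence invertible, and we conclude \(\hat{\beta}^{\mathrm{r}} = \bigl(X^\top X + n\lambda^{\mathrm{r}} I_p\bigr)^{-1} X^\top y\).

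There is no genuine obstacle in this argument; the one point that deserves a line of justification is the invertibility of \(X^\top X + n\lambda^{\mathrm{r}} I_p\), which would fail in general when \(p > n\) were \(\lambda^{\mathrm{r}}\) allowed to vanish, but is guaranteed here by the standing hypothesis \(\lambda^{\mathrm{r}} > 0\). I would also remark that, unlike the statements concerning the Lasso, Assumptions~\ref{ass:normalization} and~\ref{ass:general-position} play no role here.
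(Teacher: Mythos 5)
Your argument is correct and is exactly the standard derivation that the paper leaves implicit (the proposition is stated without proof as a classical fact): strict convexity from the positive definite Hessian, first-order stationarity giving the KKT condition, and invertibility of \(X^\top X + n\lambda^{\mathrm{r}} I_p\) from the eigenvalue shift \(n\lambda^{\mathrm{r}}>0\). Your closing remarks on why invertibility holds even when \(p>n\) and on the irrelevance of Assumptions~\ref{ass:normalization} and~\ref{ass:general-position} are both accurate.
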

	
	\subsection{The Lasso--Ridge Estimator and Related Definitions}
	\label{subsec:lasso-ridge-estimator}
	
	\begin{definition}
		A Lasso--Ridge estimator with \(\lambda_{\mathrm{R}} \geq 0\) is defined as
		\begin{align*}
			\hat{\beta}_{\mathrm{R}} =\underset{\beta \in \mathbb{R}^{p}, \beta_{-E}=0}{\arg\min} \left\{ \frac{1}{2n} \| y - X\beta \|_2^2 +  \frac{\lambda_{\mathrm{R}}}{2}\|\beta-\hat{\beta}_{\mathrm{L}} \|_2^2  \right\},
		\end{align*}
		or equivalently:
		\begin{align*}
			\hat{\beta}_{\mathrm{R}} = \hat{\delta} + \hat{\beta}_{\mathrm{L}},
		\end{align*}
		\begin{align*}
			\quad \hat{\delta} = \underset{\delta \in \mathbb{R}^ {p}, \delta_{-E}=0}{\arg\min} \left\{ \frac{1}{2n} \| y -  X\hat{\beta}_{\mathrm{L}}-X\delta \|_2^2 +  \frac{\lambda_{\mathrm{R}}}{2}\| \delta \|_2^2 \right\}.
		\end{align*}
	\end{definition}
	
	\begin{definition}
		Let \(\hat{\beta}_{\mathrm{L}}\) and \(\hat{\beta}_{\mathrm{R}}\) denote the Lasso and Lasso--Ridge estimators of \(\beta_0\), respectively.
		We define the prediction error gap:
		\[
		\Delta(\hat{\beta}_{\mathrm{L}},\hat{\beta}_{\mathrm{R}})
		:= \frac{1}{n}(
		\|X(\hat{\beta}_{\mathrm{L}}-\beta_0)\|_2^2
		- \|X(\hat{\beta}_{\mathrm{R}}-\beta_0)\|_2^2
		).
		\]
		Define the difference in prediction mean squared error:
		\[
		\mathrm{DMSE}
		:= \frac{1}{n}\,\mathbb{E}[
		\|X(\hat{\beta}_{\mathrm{L}}-\beta_0)\|_2^2
		- \|X(\hat{\beta}_{\mathrm{R}}-\beta_0)\|_2^2
		].
		\]
		A positive value of \(\Delta(\hat{\beta}_{\mathrm{L}},\hat{\beta}_{\mathrm{R}})\) means that \(\hat{\beta}_{\mathrm{R}}\) has smaller prediction error than \(\hat{\beta}_{\mathrm{L}}\) pointwise in the data, while a positive value of \(\mathrm{DMSE}\) means that \(\hat{\beta}_{\mathrm{R}}\) has smaller prediction mean squared error in expectation.
	\end{definition}
	
	\begin{definition}[Projection operators]
		Let \(A \in \mathbb{R}^{n \times k}\) be a matrix with full column rank, i.e., \(\operatorname{rank}(A)=k\). The orthogonal projection onto the column space of \(A\) is defined by
		\begin{align*}
			P_{A} := A(A^{\top}A)^{-1}A^{\top}.
		\end{align*}
		The projection onto the orthogonal complement of \(\operatorname{col}(A)\) is
		\begin{align*}
			P_{A}^{\perp} := I_n - P_{A}.
		\end{align*}
		For any \(u \in \mathbb{R}^n\), \(P_{A}u\) is the projection of \(u\) onto \(\operatorname{col}(A)\), and \(P_{A}^{\perp}u\) is the corresponding orthogonal residual. In particular, if \(X_E \in \mathbb{R}^{n \times |E|}\) denotes the submatrix of \(X\) with columns indexed by \(E\), and \(\operatorname{rank}(X_E)=|E|\), then
		\begin{align*}
			P_{X_E} &= X_E(X_E^{\top}X_E)^{-1}X_E^{\top}, \\
			P_{X_E}^{\perp} &= I_n - P_{X_E}.
		\end{align*}
	\end{definition}
	
	\section{Main Results}
	\label{sec:main-results}
	
	\subsection{High-probability lower bounds}
	\label{subsec:high-probability-lower-bounds}
	
	We begin with an eventwise lower bound for the prediction gain
	\(\Delta(\hat\beta_{\mathrm L},\hat\beta_{\mathrm R})\).
	This inequality yields a scale sensitive high probability guarantee once the usual noise domination event from Lasso analysis is imposed. Since the second step refit is performed after the first step Lasso selection, the selected support \(E\) is available at refitting time. Consequently, the selected Gram matrix \(\Sigma_{n,E}\) is observable, and data adaptive choices of \(\lambda_{\mathrm R}\) based on \(E\) are admissible.
	
	\begin{theorem}[Eventwise lower bound]\label{thm:eventwise-delta-lower}
		Assume \(\lambda_{\mathrm R} > 2\|\Sigma_{n,E}\|_\infty\). We use the empty-set convention
		\[
		\hat\delta=0,\qquad 
		\Delta(\hat\beta_{\mathrm L},\hat\beta_{\mathrm R})=0,
		\qquad
		\|\Sigma_{n,E}\|_2=\|\Sigma_{n,E}\|_\infty=0
		\quad\text{when }E=\emptyset .
		\]
		Then
		\begin{align*}
			\Delta(\hat\beta_{\mathrm L},\hat\beta_{\mathrm R})
			\geq
			\lambda_{\mathrm L}^2 |E|
			\frac{\|\Sigma_{n,E}\|_2+2\lambda_{\mathrm R}}
			{(\|\Sigma_{n,E}\|_2+\lambda_{\mathrm R})^2}
			-
			\frac{2\lambda_{\mathrm L}|E|}{n\lambda_{\mathrm R}}
			\bigl\|X_E^\top\varepsilon\bigr\|_\infty .
		\end{align*}
		Consequently, on the event
		\[
		\mathcal E_\tau(E):=
		\{
		\|X_E^\top\varepsilon/n\|_\infty\leq \tau\lambda_{\mathrm L}
		\},
		\]
		where \(\tau>0\), we have
		\begin{align*}
			\Delta(\hat\beta_{\mathrm L},\hat\beta_{\mathrm R})
			\geq
			\lambda_{\mathrm L}^2 |E|
			\left(
			\frac{\|\Sigma_{n,E}\|_2+2\lambda_{\mathrm R}}
			{(\|\Sigma_{n,E}\|_2+\lambda_{\mathrm R})^2}
			-
			\frac{2\tau}{\lambda_{\mathrm R}}
			\right).
		\end{align*}
	\end{theorem}
	
	\begin{proof}
		If \(E=\emptyset\), then \(\hat\delta=0\), and hence
		\[
		\Delta(\hat\beta_{\mathrm L},\hat\beta_{\mathrm R})=0.
		\]
		Under the empty-set conventions, the right-hand side of the claimed inequality is also zero,
		because \(|E|=0\). Therefore the result is trivial on \(\{E=\emptyset\}\).
		We henceforth assume \(E\neq\emptyset\).
		We start by rewriting the target term as follows:
		\begin{align}
			\Delta(\hat{\beta}_{\mathrm{L}},\hat{\beta}_{\mathrm{R}})&=\frac{1}{n} \|X\beta_0-X \hat{\beta}_{\mathrm{L}} \|_2^2-\frac{1}{n} \| X \beta_0 -X \hat{\beta}_{\mathrm{L}}- X\hat{ \delta}\|_2^2 \notag\\
			&=\frac{2}{n}\langle  X \beta_0 - X \hat{\beta}_{\mathrm{L}}, X\hat{ \delta} \rangle-\frac{1}{n}\langle X\hat{ \delta},  X\hat{ \delta} \rangle \notag\\   
			&=\frac{2}{n}\langle  X \beta_0 - X_E \hat{\beta}_{\mathrm{L},E}, X_E\hat{ \delta}_E\rangle-\frac{1}{n}\langle X_E\hat{ \delta}_E,X_E\hat{ \delta}_E\rangle.\label{eq:1}
		\end{align}
		Recalling the closed-form expression for the nonzero part of the Lasso estimator under the KKT conditions (Proposition~\ref{prop:lasso-kkt}), we have
		\begin{align*}
			\hat{\beta}_{\mathrm{L},E} = (X_E^{\top}X_E)^{-1}(X_E^{\top}y - n\lambda_{\mathrm{L}} s).
		\end{align*}
		The term \(X \beta_0 - X_E \hat{\beta}_{\mathrm{L},E}\)	appearing in the previous step can be rewritten as follows:
		\begin{align*}
			X \beta_0 - X_E \hat{\beta}_{\mathrm{L},E}
			&=X \beta_0 - X_E(X_E^{\top}X_E)^{-1}(X_E^{\top}y - n\lambda_{\mathrm{L}} s)\\
			&=X\beta_0 - X_E(X_E^{\top}X_E)^{-1}(X_E^{\top}(X\beta_0+\varepsilon) - n\lambda_{\mathrm{L}} s)\\
			&= P_{X_E}^{\perp}X\beta_0-P_{X_E}\varepsilon+n\lambda_{\mathrm{L}} X_E(X_E^{\top}X_E)^{-1}s,
		\end{align*}
		where \(P_{X_E}\) denotes the orthogonal projection onto \(\operatorname{col}(X_E)\) and \(P_{X_E}^{\perp} = I_{n} - P_{X_E}\) is the projection onto its orthogonal complement. Inserting this rewritten form into equality~\eqref{eq:1} and rearranging, we obtain
		\begin{align}
			\Delta(\hat{\beta}_{\mathrm{L}},\hat{\beta}_{\mathrm{R}})&=\frac{2}{n}\langle P_{X_E}^{\perp}X\beta_0,  X_E\hat{ \delta}_E \rangle-\frac{2}{n}\langle P_{X_E}\varepsilon,  X_E\hat{ \delta}_E \rangle \notag\\
			&+\frac{2}{n}\langle n\lambda_{\mathrm{L}} X_E(X_E^{\top}X_E)^{-1}s,  X_E\hat{ \delta}_E \rangle-\frac{1}{n}\langle X_E\hat{ \delta}_E,  X_E\hat{ \delta}_E \rangle.  \label{eq:2}                                        
		\end{align}
		Next, we deal with the four inner products appearing in the previous equation~\eqref{eq:2}. First, note that for the first term, we have
		\begin{align*}
			\langle P_{X_E}^{\perp}X\beta_0,  X_E\hat{ \delta}_E \rangle 
			&=\hat{\delta}_{E}^{\top}X_E^{\top}(I_{n}-X_E(X_E^{\top}X_E)^{-1}X_E^{\top})X\beta_0\\
			&=\hat{\delta}_{E}^{\top}(X_E^{\top}-X_E^{\top}X_E(X_E^{\top}X_E)^{-1}X_E^{\top})X\beta_0\\
			&=\hat{\delta}_{E}^{\top}\mathbf{0}_{|E| \times n}X\beta_0\\
			&=0.                                                               
		\end{align*}
		Next, we simplify the second term in equation~\eqref{eq:2} using straightforward matrix multiplication:
		\begin{align*}
			\langle P_{X_E}\varepsilon,  X_E\hat{ \delta}_E \rangle &=\hat{ \delta}_E^{\top}X_E^{\top}(X_E(X_E^{\top}X_E)^{-1}X_E^{\top})\varepsilon\\
			&=\langle X_E\hat{\delta}_{E}, \varepsilon  \rangle.              
		\end{align*}
		We now simplify the third term:
		\begin{align*}
			\frac{2}{n}\langle n\lambda_{\mathrm{L}} X_E(X_E^{\top}X_E)^{-1}s,  X_E\hat{ \delta}_E \rangle
			&=2\lambda_{\mathrm{L}}\hat{ \delta}_E^{\top}X_E^{\top} X_E(X_E^{\top}X_E)^{-1}s\\
			&=2\lambda_{\mathrm{L}}\langle \hat{ \delta}_E, s\rangle.
		\end{align*}
		Substituting the above three equalities into equation~\eqref{eq:2}, we obtain
		\begin{align}
			\Delta(\hat{\beta}_{\mathrm{L}},\hat{\beta}_{\mathrm{R}})=2\lambda_{\mathrm{L}}\langle \hat{ \delta}_E, s \rangle-\frac{1}{n}\langle X_E\hat{ \delta}_E,  X_E\hat{ \delta}_E\rangle-\frac{2}{n}\langle X_E\hat{\delta}_{E},\varepsilon\rangle.\label{eq:3}
		\end{align}
		The matrix \(\Sigma_{n,E}=X_E^\top X_E/n\) admits the orthogonal decomposition:
		\begin{align*}
			\Sigma_{n,E}=P \operatorname{diag}(\mu_1, \dots, \mu_{|E|}) P^\top,
		\end{align*}
		where \( \mu_1 \geq \mu_2 \geq \cdots \geq \mu_{|E|}\geq 0 \) denote the ordered eigenvalues of \(\Sigma_{n,E}=X_E^\top X_E/n\) and \(P\) is the corresponding eigenvector matrix. According to Lemma~\ref{lem:kkt-closed-form} in the Appendix, we obtain the following closed-form representation of \(\hat{\delta}_{E}\):
		\begin{align*}
			\hat{\delta}_{E}=\frac{\lambda_{\mathrm{L}}}{\lambda_{\mathrm{R}}}(\frac{\Sigma_{n,E}}{\lambda_{\mathrm{R}}}+ I_{|E|})^{-1}s.
		\end{align*}
		By orthogonal decomposition, we have
		\begin{align*}
			2\lambda_{\mathrm{L}}\langle \hat{ \delta}_E, s \rangle
			&=2\lambda_{\mathrm{L}}^2s^{\top}(\Sigma_{n,E}+ \lambda_{\mathrm{R}} I_{|E|})^{-1}s\\
			&=2\lambda_{\mathrm{L}}^2s^\top P \operatorname{diag}(\frac{1}{\mu_1+\lambda_{\mathrm{R}}}, \dots, \frac{1}{\mu_{|E|}+\lambda_{\mathrm{R}}}) P^\top s.                  
		\end{align*}
		Following a similar argument, we derive
		\begin{align*}
			\frac{1}{n}\langle X_E\hat{ \delta}_E,  X_E\hat{ \delta}_E \rangle
			&=\frac{1}{n}\hat{\delta}_{E}^{\top}X_E^{\top}X_E\hat{\delta}_{E}\\
			&=\lambda_{\mathrm{L}}^2s^{\top}( \Sigma_{n,E}+ \lambda_{\mathrm{R}} I_{|E|})^{-1}\Sigma_{n,E}( \Sigma_{n,E}+ \lambda_{\mathrm{R}} I_{|E|})^{-1}s\\
			&=\lambda_{\mathrm{L}}^2s^\top P \operatorname{diag}(\frac{\mu_1}{(\mu_1+\lambda_{\mathrm{R}})^2}, \dots, \frac{\mu_{|E|}}{(\mu_{|E|}+\lambda_{\mathrm{R}})^2}) P^\top s.     
		\end{align*}
		For notational convenience, define
		\begin{align*}
			H_{E,s}(\lambda_{\mathrm{L}}, \lambda_{\mathrm{R}})
			:= 2\lambda_{\mathrm{L}}\langle \hat{ \delta}_E, s \rangle-\frac{1}{n}\langle X_E\hat{ \delta}_E,  X_E\hat{ \delta}_E\rangle.
		\end{align*}
		Combining it with the previous two equations, we obtain the following simplified expression:
		\begin{align*}
			H_{E,s}(\lambda_{\mathrm{L}}, \lambda_{\mathrm{R}})
			&=\lambda_{\mathrm{L}}^2s^\top P \operatorname{diag}(\frac{\mu_1+2\lambda_{\mathrm{R}}}{(\mu_1+\lambda_{\mathrm{R}})^2}, \dots, \frac{\mu_{|E|}+2\lambda_{\mathrm{R}}}{(\mu_{|E|}+\lambda_{\mathrm{R}})^2}) P^\top s.
		\end{align*}
		Since \(\Sigma_{n,E}\) is symmetric positive semidefinite,
		\[
		\mu_i\leq \mu_1=\|\Sigma_{n,E}\|_2,
		\qquad i=1,\dots,|E|.
		\]
		Moreover, the function \(x\mapsto (x+2\lambda_{\mathrm R})/(x+\lambda_{\mathrm R})^2\) is decreasing on \([0,\infty)\). Since each eigenvalue of \(\Sigma_{n,E}\) is bounded above by \(\|\Sigma_{n,E}\|_2\), and since \(s\) is a binary vector while \(P\) is orthogonal and hence norm preserving, we obtain
		\begin{align}
			H_{E,s}(\lambda_{\mathrm L},\lambda_{\mathrm R})
			&\geq
			\lambda_{\mathrm L}^2
			\frac{\|\Sigma_{n,E}\|_2+2\lambda_{\mathrm R}}{(\|\Sigma_{n,E}\|_2+\lambda_{\mathrm R})^2}
			\|P^\top s\|_2^2 \notag\\
			&=
			\lambda_{\mathrm L}^2 |E|
			\frac{\|\Sigma_{n,E}\|_2+2\lambda_{\mathrm R}}{(\|\Sigma_{n,E}\|_2+\lambda_{\mathrm R})^2}.\label{eq:4}
		\end{align}
		Next, since \(\lambda_{\mathrm R}>2\|\Sigma_{n,E}\|_\infty\), Lemma~\ref{lem:deltaE-l1-alt-form} in the appendix yields
		\(\|\hat\delta_E\|_1=s^\top\hat\delta_E\). Together with Lemma~\ref{lem:kkt-closed-form}, this gives
		\begin{align*}
			\|\hat\delta_E\|_1
			&=\frac{\lambda_{\mathrm L}}{\lambda_{\mathrm R}} s^\top (\frac{\Sigma_{n,E}}{\lambda_{\mathrm{R}}}+ I_{|E|})^{-1}s\\
			&=\frac{\lambda_{\mathrm L}}{\lambda_{\mathrm R}}
			s^\top P
			\operatorname{diag}(
			\frac{\lambda_{\mathrm R}}{\mu_1+\lambda_{\mathrm R}},
			\dots,
			\frac{\lambda_{\mathrm R}}{\mu_{|E|}+\lambda_{\mathrm R}}
			)
			P^\top s \\
			&\leq
			\frac{\lambda_{\mathrm L}}{\lambda_{\mathrm R}}\|P^\top s\|_2^2\\
			&=
			\frac{\lambda_{\mathrm L}|E|}{\lambda_{\mathrm R}}.
		\end{align*}
		Therefore, by duality,
		\begin{align}
			\frac{2}{n}\langle X_E\hat\delta_E,\varepsilon\rangle
			&=
			\frac{2}{n}\langle \hat\delta_E,X_E^\top\varepsilon\rangle \notag\\
			&\leq
			\frac{2}{n}\|\hat\delta_E\|_1 \left\|X_E^\top\varepsilon\right\|_\infty \notag\\
			&\leq
			\frac{2\lambda_{\mathrm L}|E|}{n\lambda_{\mathrm R}}
			\left\|X_E^\top\varepsilon\right\|_\infty.\label{eq:5}
		\end{align}
		Combining the above estimates \eqref{eq:3}, \eqref{eq:4}, and \eqref{eq:5} proves the first inequality. The second follows immediately on the event \(\mathcal E_\tau(E)\).
	\end{proof}

	\begin{theorem}[High-probability lower bound]\label{thm:hp-delta-lower}
		Assume that \(\lambda_{\mathrm R} > 2\|\Sigma_{n,E}\|_\infty\), and let
		\[
		\mathcal E_\tau :=
		\{
		\|X^\top\varepsilon/n\|_\infty \leq \tau\lambda_{\mathrm L}
		\},
		\]
		where \(\tau>0\). Then, on the event \(\mathcal E_\tau\),
		\begin{align*}
			\Delta(\hat\beta_{\mathrm L},\hat\beta_{\mathrm R})
			\geq
			\lambda_{\mathrm L}^2 |E|
			\left(
			\frac{\|\Sigma_{n,E}\|_2+2\lambda_{\mathrm R}}{(\|\Sigma_{n,E}\|_2+\lambda_{\mathrm R})^2}
			-
			\frac{2\tau}{\lambda_{\mathrm R}}
			\right).
		\end{align*}
	\end{theorem}
	
	\begin{proof}
		On the event \(\mathcal E_\tau\), we have
		\[
		\|X_E^\top\varepsilon/n\|_\infty
		\leq
		\|X^\top\varepsilon/n\|_\infty
		\leq
		\tau\lambda_{\mathrm L}.
		\]
		Hence \(\mathcal E_\tau \subseteq \mathcal E_\tau(E)\), and the claim follows from Theorem~\ref{thm:eventwise-delta-lower}.
	\end{proof}
	
	\begin{remark}
		Theorem~\ref{thm:eventwise-delta-lower} is the sharper statement, since it only requires control of the stochastic term on the selected support \(E\). Theorem~\ref{thm:hp-delta-lower} replaces this by the standard global event controlling the noise term
		\[
		\|X^\top\varepsilon/n\|_\infty \leq \tau\lambda_{\mathrm L},
		\]
		thereby placing the result in the usual framework of Lasso analysis.
	\end{remark}
	
	\begin{corollary}[Spectral-scaled choice of \(\lambda_{\mathrm R}\)]\label{cor:hp-delta-lower-spectral}
		Suppose that, on the event \(\{E\neq\emptyset\}\), the second-step tuning parameter is chosen as
		\[
		\lambda_{\mathrm R} = c\|\Sigma_{n,E}\|_\infty,
		\]
		for some constant \(c>2\). Let
		\[
		\mathcal E_\tau :=
		\{
		\|X^\top\varepsilon/n\|_\infty \leq \tau\lambda_{\mathrm L}
		\},
		\]
		where \(\tau>0\). Then, on the event \(\mathcal E_\tau \cap \{E\neq\emptyset\}\),
		\begin{align*}
			\Delta(\hat\beta_{\mathrm L},\hat\beta_{\mathrm R})
			\geq
			\frac{\lambda_{\mathrm L}^2|E|}{\lambda_{\mathrm R}}
			\left(
			\frac{c(c+2)}{(c+1)^2}
			-
			2\tau
			\right).
		\end{align*}
	\end{corollary}
	
	\begin{proof}
		By \(\lambda_{\mathrm R}=c\|\Sigma_{n,E}\|_\infty\) and \(\|\Sigma_{n,E}\|_2 \leq \|\Sigma_{n,E}\|_\infty\), we have
		\[
		\|\Sigma_{n,E}\|_2 \leq \frac{\lambda_{\mathrm R}}{c}.
		\]
		Since the function
		\[
		x \mapsto \frac{x+2\lambda_{\mathrm R}}{(x+\lambda_{\mathrm R})^2}
		\]
		is decreasing on \([0,\infty)\), it follows that
		\[
		\frac{\|\Sigma_{n,E}\|_2+2\lambda_{\mathrm R}}{(\|\Sigma_{n,E}\|_2+\lambda_{\mathrm R})^2}
		\geq
		\frac{\lambda_{\mathrm R}/c+2\lambda_{\mathrm R}}{(\lambda_{\mathrm R}/c+\lambda_{\mathrm R})^2}
		=
		\frac{1}{\lambda_{\mathrm R}}
		\frac{c(c+2)}{(c+1)^2}.
		\]
		The claim now follows from Theorem~\ref{thm:hp-delta-lower}.
	\end{proof}
	
	\begin{remark}[Interpretation of the spectral scaling]
		Corollary~\ref{cor:hp-delta-lower-spectral} makes the dependence on the selected-support geometry explicit. Under the choice
		\[
		\lambda_{\mathrm R}=c\|\Sigma_{n,E}\|_\infty,
		\]
		the lower bound scales as
		\[
		\frac{\lambda_{\mathrm L}^2|E|}{\|\Sigma_{n,E}\|_\infty},
		\]
		up to the multiplicative factor
		\[
		\frac{c(c+2)}{(c+1)^2}-2\tau.
		\]
		In particular, the lower bound is positive whenever
		\[
		2\tau < \frac{c(c+2)}{(c+1)^2}.
		\]
		Moreover, the restriction to the event \(\{E\neq\emptyset\}\) is mild in settings where the signal is sufficiently separated from the Lasso threshold: under Gaussian noise, Lemma~\ref{lem:eventE-empty-prob} and Remark~\ref{rem:eventE-empty-exp-decay} in Appendix~\ref{appA} show that \(\mathbb P(E=\emptyset)\) is exponentially small when \(\|\Sigma_n\beta_0\|_\infty-\lambda_{\mathrm L}\) is sufficiently positive.
	\end{remark}
	
	The global event \(\mathcal E_\tau\) may be conservative when the selected support is known to lie inside a smaller deterministic set. The next corollary localizes the stochastic control accordingly.
	
	\begin{corollary}[Localized high-probability lower bound under support containment]
		\label{cor:hp-delta-lower-T0}
		Assume that \(\lambda_{\mathrm R} > 2\|\Sigma_{n,E}\|_\infty\), and let \(T_0\subseteq\{1,\dots,p\}\) be deterministic. Define
		\[
		\mathcal E_\tau(T_0) :=
		\{
		\|X_{T_0}^\top\varepsilon/n\|_\infty \leq \tau\lambda_{\mathrm L}
		\},
		\]
		where \(\tau>0\). Then, on the event
		\[
		\mathcal E_\tau(T_0)\cap\{E\subseteq T_0\},
		\]
		we have
		\begin{align*}
			\Delta(\hat\beta_{\mathrm L},\hat\beta_{\mathrm R})
			\geq
			\lambda_{\mathrm L}^2 |E|
			\left(
			\frac{\|\Sigma_{n,E}\|_2+2\lambda_{\mathrm R}}{(\|\Sigma_{n,E}\|_2+\lambda_{\mathrm R})^2}
			-
			\frac{2\tau}{\lambda_{\mathrm R}}
			\right).
		\end{align*}
	\end{corollary}
	
	\begin{proof}
		On the event \(\{E\subseteq T_0\}\), we have
		\[
		\|X_E^\top\varepsilon/n\|_\infty
		\leq
		\|X_{T_0}^\top\varepsilon/n\|_\infty.
		\]
		Hence, on \(\mathcal E_\tau(T_0)\cap\{E\subseteq T_0\}\),
		\[
		\|X_E^\top\varepsilon/n\|_\infty
		\leq
		\tau\lambda_{\mathrm L}.
		\]
		Thus
		\[
		\mathcal E_\tau(T_0)\cap\{E\subseteq T_0\}\subseteq \mathcal E_\tau(E),
		\]
		and the claim follows from Theorem~\ref{thm:eventwise-delta-lower}.
	\end{proof}
	
	\begin{remark}[Stochastic control of the localized event]
		The localized event \(\mathcal E_\tau(T_0)\) is controlled by the Gaussian maximum
		\[
		\|X_{T_0}^\top\varepsilon/n\|_\infty .
		\]
		For fixed design with Gaussian noise, this quantity is a finite Gaussian supremum. Standard maximal inequalities, and more generally empirical process and chaining methods, control its expectation in terms of the geometry of the localized design class. Such expectation bounds support the event \(\mathcal E_\tau(T_0)\), for example through Markov's inequality:
		\[
		\mathbb P\!\left(
		\|X_{T_0}^\top\varepsilon/n\|_\infty>\tau\lambda_{\mathrm L}
		\right)
		\leq
		\frac{
			\mathbb E\|X_{T_0}^\top\varepsilon/n\|_\infty
		}{
			\tau\lambda_{\mathrm L}
		}.
		\]
		Auxiliary bounds of this type are collected in Appendix~\ref{app:stochastic-term}.
	\end{remark}
	
	\begin{remark}[Deterministic control of the selected-design geometry]
		A convenient sufficient condition for controlling the selected-design geometry is the existence of a deterministic set \(T_0\) such that \(E\subseteq T_0\) with high probability. On that event,
		\[
		\|\Sigma_{n,E}\|_\infty \leq \|\Sigma_{n,T_0}\|_\infty
		\qquad\text{and}\qquad
		\|\Sigma_{n,E}\|_2 \leq \|\Sigma_{n,T_0}\|_2,
		\]
		so the random geometry on the selected support is controlled by the deterministic Gram matrix on \(T_0\). When \(T_0=S_0\), where \(S_0=\{j:\beta_{0,j}\neq0\}\) is the true support, the containment event \(E\subseteq S_0\) corresponds to absence of false inclusions, a standard property studied in Lasso selection theory; see, for example, \citep[Theorem~11.3, p.~302]{hastie2015statistical}. More generally, such containment requirements are weaker than exact variable selection consistency. In particular, one may allow containment in a deterministic superset of \(S_0\), rather than requiring exact recovery of \(S_0\), and the corresponding containment failure probability can still be small under suitable conditions.
		Corollary~\ref{cor:hp-delta-lower-T0} further shows that, under support containment, the global noise event may be replaced by the localized event \(\mathcal E_\tau(T_0)\).
	\end{remark}
	
	\subsection{Expectation lower bounds}
	\label{subsec:expectation-lower-bounds}

	We next derive an unconditional expectation lower bound. In contrast to the preceding high-probability result, this bound is obtained under the alternative scaling \(\lambda_{\mathrm R}=c|E|\). This choice yields a clean and explicit non-asymptotic statement and highlights the role of expectations of Gaussian maxima in determining the relevant tuning regime. These quantities, in turn, reflect the intrinsic structure of the design matrix.
	\begin{theorem}[Expectation lower bound]\label{thm:lasso-dmse-lower-bound}
		Consider our Lasso--Ridge method. Select the tuning parameter of the second step as \(\lambda_{\mathrm R}=c|E|\), where \(c>2\). Then
		\begin{align*}
			\mathrm{DMSE}
			&\geq\sup_{T_{0}\subseteq\{1,\dots,p\}}\Bigl\{\frac{2\lambda_{\mathrm L}}{c}\Bigl(\frac{c(2c+1)}{2(c+1)^2}\lambda_{\mathrm L}\mathbb{P}(E\neq\emptyset)
			-\frac{1}{n}\mathbb{E}[\|X_{T_0}^{\top}\varepsilon\|_{\infty}]\\
			&\hspace{2em}
			-\frac{1}{n}\sqrt{\mathbb{E}[\|X^{\top}\varepsilon\|_{\infty}^2]}\sqrt{\mathbb{P}(E\not\subseteq T_0)}\Bigr)\Bigr\}.
		\end{align*}
	\end{theorem}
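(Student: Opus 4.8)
\medskip
\noindent\textbf{Proof proposal.} The plan is to derive an exact formula for $\Delta(\hat{\beta}_{\mathrm{L}},\hat{\beta}_{\mathrm{R}})$ on the event $\{E\neq\emptyset\}$ (it is identically zero when $E=\emptyset$, by the zero-dimensional conventions), split it into a deterministic quadratic term and a stochastic cross term, lower-bound the former in a way that does not depend on the realized model size, and control the latter via an operator-norm estimate combined with a decomposition of $\{E\neq\emptyset\}$ relative to an arbitrary candidate set $T_0$. For the closed form, I would first solve the inner ridge problem: stationarity gives $\hat{\delta}_E=(X_E^{\top}X_E+n\lambda_{\mathrm{R}}I)^{-1}X_E^{\top}(y-X\hat{\beta}_{\mathrm{L}})$, and the Lasso KKT identity $X_E^{\top}(y-X\hat{\beta}_{\mathrm{L}})=n\lambda_{\mathrm{L}}s$ turns this into $\hat{\delta}_E=n\lambda_{\mathrm{L}}M^{-1}s$ with $M:=X_E^{\top}X_E+n\lambda_{\mathrm{R}}I$. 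Writing $X(\hat{\beta}_{\mathrm{L}}-\beta_0)=\varepsilon-(y-X\hat{\beta}_{\mathrm{L}})$ and $X(\hat{\beta}_{\mathrm{R}}-\beta_0)=X(\hat{\beta}_{\mathrm{L}}-\beta_0)+X_E\hat{\delta}_E$, expanding $\|u\|_2^2-\|u+v\|_2^2=-2\langle u,v\rangle-\|v\|_2^2$, using the KKT identity once more, and substituting $X_E^{\top}X_E=M-n\lambda_{\mathrm{R}}I$, I expect
\[
\Delta(\hat{\beta}_{\mathrm{L}},\hat{\beta}_{\mathrm{R}})
=\lambda_{\mathrm{L}}^2\sum_i\tilde{s}_i^{\,2}\,\frac{\mu_i+2\lambda_{\mathrm{R}}}{(\mu_i+\lambda_{\mathrm{R}})^2}
-2\lambda_{\mathrm{L}}\,(X_E^{\top}\varepsilon)^{\top}M^{-1}s ,
\]
where the $\mu_i>0$ are the eigenvalues of $\Sigma_{n,E}$ and $\tilde{s}$ is $s$ in the associated eigenbasis, so $\|\tilde{s}\|_2^2=\|s\|_2^2=|E|$.

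For the deterministic term, column normalization forces $\operatorname{tr}\Sigma_{n,E}=|E|$, hence $0<\mu_i\le|E|$ for every $i$. With $\lambda_{\mathrm{R}}=c|E|$, the map $t\mapsto(t+2c)/(t+c)^2$ is strictly decreasing on $(0,1]$, so $(\mu_i+2\lambda_{\mathrm{R}})/(\mu_i+\lambda_{\mathrm{R}})^2\ge(2c+1)/\bigl(|E|(c+1)^2\bigr)$ for each $i$; summing against $\tilde{s}_i^{\,2}$ and using $\|\tilde{s}\|_2^2=|E|$, the deterministic term is at least $\lambda_{\mathrm{L}}^2(2c+1)/(c+1)^2$ on $\{E\neq\emptyset\}$, with no dependence on $|E|$ or on the conditioning of $X_E$.

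For the stochastic term, I would bound $|(X_E^{\top}\varepsilon)^{\top}M^{-1}s|\le\|X_E^{\top}\varepsilon\|_\infty\,\|M^{-1}s\|_1$, and then $\|M^{-1}s\|_1\le|E|/\lambda_{\min}(M)\le 1/(nc)$, combining $\|M^{-1}s\|_1\le\sqrt{|E|}\,\|M^{-1}s\|_2$, $\|s\|_2=\sqrt{|E|}$, and $\lambda_{\min}(M)\ge nc|E|$; hence the cross term is at most $(2\lambda_{\mathrm{L}}/(nc))\,\|X_E^{\top}\varepsilon\|_\infty$ on $\{E\neq\emptyset\}$. Fixing $T_0\subseteq\{1,\dots,p\}$, I decompose $\{E\neq\emptyset\}=\{E\neq\emptyset,\,E\subseteq T_0\}\cup\{E\not\subseteq T_0\}$ (the case $E=\emptyset$ lies in the first event), bound $\|X_E^{\top}\varepsilon\|_\infty$ by $\|X_{T_0}^{\top}\varepsilon\|_\infty$ on the first event and by $\|X^{\top}\varepsilon\|_\infty$ on the second, take expectations, and apply Cauchy--Schwarz on the second piece to get $\mathbb{E}[\|X_E^{\top}\varepsilon\|_\infty\mathbf{1}\{E\neq\emptyset\}]\le\mathbb{E}\|X_{T_0}^{\top}\varepsilon\|_\infty+\sqrt{\mathbb{E}\|X^{\top}\varepsilon\|_\infty^2}\,\sqrt{\mathbb{P}(E\not\subseteq T_0)}$. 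Combining with the previous paragraph and $\mathrm{DMSE}=\mathbb{E}[\Delta(\hat{\beta}_{\mathrm{L}},\hat{\beta}_{\mathrm{R}})]$ gives, for every $T_0$,
\[
\mathrm{DMSE}\ge\frac{2c+1}{(c+1)^2}\lambda_{\mathrm{L}}^2\,\mathbb{P}(E\neq\emptyset)
-\frac{2\lambda_{\mathrm{L}}}{nc}\Bigl(\mathbb{E}\|X_{T_0}^{\top}\varepsilon\|_\infty+\sqrt{\mathbb{E}\|X^{\top}\varepsilon\|_\infty^2}\,\sqrt{\mathbb{P}(E\not\subseteq T_0)}\Bigr),
\]
which is exactly the claimed bound after factoring out $2\lambda_{\mathrm{L}}/c$; taking the supremum over $T_0$ finishes the proof.

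The delicate step is the deterministic lower bound: the gain must come out free of the random quantity $|E|$ and of the eigenvalues of $\Sigma_{n,E}$, and a crude estimate would leave a stray $1/|E|$ or an eigenvalue-weighted factor that survives the expectation. The clean cancellation works only because $\lambda_{\mathrm{R}}$ is tied to $|E|$, because $\operatorname{tr}\Sigma_{n,E}=|E|$, and because the equicorrelation sign vector has $\|s\|_2^2=|E|$. A more routine difficulty is arranging the $\sqrt{|E|}$ factors in the stochastic step so that the operator-norm estimate collapses to the exact constant $1/(nc)$, and keeping the zero-dimensional conventions consistent so that the $E=\emptyset$ case contributes exactly zero throughout.
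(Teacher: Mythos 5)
Your proposal is correct and follows the same overall architecture as the paper's proof: the same exact decomposition
\(\Delta=2\lambda_{\mathrm{L}}\langle\hat{\delta}_E,s\rangle-\tfrac{1}{n}\|X_E\hat{\delta}_E\|_2^2-\tfrac{2}{n}\langle X_E\hat{\delta}_E,\varepsilon\rangle\), the same trace-plus-monotonicity argument giving the \(|E|\)-free constant \(\lambda_{\mathrm{L}}^2(2c+1)/(c+1)^2\), and the same \(T_0\)-splitting with Cauchy--Schwarz for \(\mathbb{E}\|X_E^{\top}\varepsilon\|_\infty\). Two sub-steps differ. First, you reach the decomposition via the residual identity \(X(\hat{\beta}_{\mathrm{L}}-\beta_0)=\varepsilon-(y-X\hat{\beta}_{\mathrm{L}})\) and one application of the KKT identity, whereas the paper substitutes the closed form of \(\hat{\beta}_{\mathrm{L},E}\) and cancels a projection term; your route is slightly more direct but equivalent. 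Second, and more substantively, you control the cross term by \(\|M^{-1}s\|_1\le\sqrt{|E|}\,\|M^{-1}s\|_2\le|E|/\lambda_{\min}(M)\le 1/(nc)\), which needs only \(c>0\). The paper instead invokes its Lemma~A.3, a Neumann-series argument showing \(\operatorname{sign}(\hat{\delta}_E)=\operatorname{sign}(s)\) so that \(\|\hat{\delta}_E\|_1=s^{\top}\hat{\delta}_E\), and this is precisely where the hypothesis \(c>2\) (i.e.\ \(\lambda_{\mathrm{R}}>2|E|\)) enters the paper's proof. Your operator-norm route lands on the identical constant \(\lambda_{\mathrm{L}}/c\) for \(\|\hat{\delta}_E\|_1\) while bypassing the sign lemma entirely, so it in fact establishes the stated bound for every \(c>0\), showing that the restriction \(c>2\) is an artifact of the paper's particular argument rather than a necessity for this theorem. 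Everything else, including the treatment of the \(E=\emptyset\) event and the final assembly, matches the paper.
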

	
	\begin{remark}
		Suppose that \(\varepsilon \sim N(0,\sigma^{2}I_n)\) under the column normalization of \(X\). Then
		\(
		\mathbb{E}[\|X^{\top}\varepsilon/n\|_{\infty}^{2}]
		\leq \sigma^{2}(2\log(4p)+1)/n.
		\)
		See Lemma~\ref{lem:gaussian-infty-sq-bound} in the appendix for a proof. Moreover, \(\mathbb{P}(E\neq\emptyset)\), that is, the probability that the Lasso estimator is nonzero, is typically close to one under standard signal and tuning regimes. See Lemma~\ref{lem:eventE-empty-prob} and Remark~\ref{rem:eventE-empty-exp-decay} in the appendix for a more detailed discussion.
	\end{remark}
	
	\begin{remark}[On the data-dependent choice of \(\lambda_{\mathrm{R}}\)]
		In Theorem~\ref{thm:lasso-dmse-lower-bound}, the choice \(\lambda_{\mathrm{R}}=c|E|\) is adopted primarily to obtain a clean and easily interpretable lower bound for the prediction MSE difference. It is mainly technical and may be weakened under additional design assumptions, for example when the selected columns are sufficiently weakly correlated. Other data-dependent choices of \(\lambda_{\mathrm{R}}\) are also possible, but they would generally lead to alternative formulations involving additional random design quantities or less transparent probability bounds.
	\end{remark}
	
	\begin{remark}[Approximate support containment]
		As discussed after Corollary~\ref{cor:hp-delta-lower-T0}, the deterministic set \(T_0\) is used to localize the stochastic term to a smaller index set containing the selected support. In the expectation bound, exact containment is replaced by approximate containment. Specifically, if
		\begin{align*}
			\mathbb{P}(E \subseteq T_0) \geq 1 - \eta
		\end{align*}
		for some \(0 < \eta < 1\), then the containment-error term in Theorem~\ref{thm:lasso-dmse-lower-bound} is controlled by \(\sqrt{\eta}\). Taking \(T_{0}=\{1,\dots,p\}\) removes this term and yields the following corollary.
	\end{remark}
	
	\begin{corollary}[Global expectation lower bound]
		Consider our Lasso--Ridge method. Select the tuning parameter of the second step as \(\lambda_{\mathrm{R}}=c|E|\), where \(c>2\). Then
		\begin{align*}
			\mathrm{DMSE}
			&\geq\frac{2\lambda_{\mathrm{L}}}{c}\left(\frac{c(2c+1)}{2(c+1)^2}\lambda_{\mathrm{L}}\mathbb{P}(E\neq \emptyset)-\frac{1}{n}\mathbb{E}[\|X^{\!\top}\varepsilon\|_{\infty}]\right).
		\end{align*}
	\end{corollary}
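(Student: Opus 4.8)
The plan is to obtain this corollary directly from Theorem~3.1 by specializing the supremum over containment sets to a single convenient choice, namely $T_0 = \{1,\dots,p\}$. Since the right-hand side of Theorem~3.1 is a supremum over all subsets $T_0 \subseteq \{1,\dots,p\}$, it dominates the value of the bracketed expression at any particular admissible $T_0$; taking the full index set therefore yields a valid (if generally not the tightest) lower bound, with no additional optimization required.

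Concretely, I would carry out two elementary substitutions. First, with $T_0 = \{1,\dots,p\}$ the submatrix $X_{T_0}$ equals $X$, so $\mathbb{E}[\|X_{T_0}^{\top}\varepsilon\|_{\infty}] = \mathbb{E}[\|X^{\top}\varepsilon\|_{\infty}]$. Second, the equicorrelation set $E$ always satisfies $E \subseteq \{1,\dots,p\}$ by definition, so the event $\{E \not\subseteq T_0\}$ is empty and $\mathbb{P}(E \not\subseteq T_0) = 0$; consequently the third term $\tfrac{1}{n}\sqrt{\mathbb{E}[\|X^{\top}\varepsilon\|_{\infty}^{2}]}\sqrt{\mathbb{P}(E \not\subseteq T_0)}$ drops out entirely. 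Inserting these two simplifications into the lower bound of Theorem~3.1 gives exactly
\[
\mathrm{DMSE} \geq \frac{2\lambda_{\mathrm{L}}}{c}\Bigl(\frac{c(2c+1)}{2(c+1)^2}\lambda_{\mathrm{L}}\,\mathbb{P}(E\neq\emptyset) - \frac{1}{n}\mathbb{E}[\|X^{\top}\varepsilon\|_{\infty}]\Bigr).
\]

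I do not anticipate any real obstacle: the statement is a one-line consequence of the theorem, and the only point worth checking is that the supremum over $T_0$ in Theorem~3.1 does range over all subsets of $\{1,\dots,p\}$, so the full set is indeed admissible. As an alternative route, one could reprove the bound from scratch by re-running the chain of estimates (3)--(5) in the proof of Theorem~3.1 and bounding $\|X_E^{\top}\varepsilon\|_{\infty} \le \|X^{\top}\varepsilon\|_{\infty}$ directly, thereby bypassing the Cauchy--Schwarz split; this reaches the same inequality but is strictly less informative than the specialization argument, so I would present the latter.
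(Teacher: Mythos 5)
Your proposal is correct and is exactly the paper's route: the remark preceding the corollary states that it follows from Theorem~3.1 by taking \(T_{0}=\{1,\dots,p\}\), whence \(X_{T_0}=X\) and \(\mathbb{P}(E\not\subseteq T_0)=0\) so the third term vanishes. Nothing further is needed.
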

	
	\begin{remark}
		Consider the function
		\begin{align*}
			g(c)=\frac{c(2c+1)}{2(c+1)^2}, \quad c \geq 0.
		\end{align*}
		The function \(g(c)\) is monotone increasing, and it converges to \(1\) as \(c \to \infty\). Suppose the penalty level \(\lambda_{\mathrm{L}}\) of the Lasso estimator satisfies
		\begin{align*}
			\lambda_{\mathrm{L}}
			>
			\frac{\mathbb{E}[\|X^{\!\top}\varepsilon\|_{\infty}]}{n\mathbb{P}(E \neq \emptyset)}.
		\end{align*}
		Then there exists a constant \(c>2\) for the tuning parameter \(\lambda_{\mathrm{R}}=c|E|\) such that
		\begin{align*}
			\mathrm{DMSE} > 0.
		\end{align*}
		In particular, since \(\mathbb{P}(E \neq \emptyset)\) is typically close to one under the considered regimes, the Lasso can be prediction-suboptimal within this refinement class once \(\lambda_{\mathrm{L}}\) moderately exceeds the noise-max scale \(\mathbb{E}[\|X^{\top}\varepsilon\|_{\infty}]/n\).
	\end{remark}
	
	\begin{remark}[Intrinsic stochastic complexity]
		The expectation lower bound shows that the relevant stochastic scale is not determined solely by the ambient dimension \(p\), but by the complexity of Gaussian maxima over the selected or localized design. In particular, when \(\mathbb P(E\not\subseteq T_0)\) is small, the lower bound improves as
		\[
		\mathbb E[\|X_{T_0}^\top\varepsilon\|_\infty/n]
		\]
		decreases. This can occur when the stochastic error is sufficiently small, under support localization, or when there is geometric redundancy among the columns of the design matrix. Thus the result gives a finite-sample explanation for why the universal Lasso tuning level may be overly conservative in structurally favorable regimes. Appendix~\ref{app:stochastic-term} provides auxiliary bounds illustrating how this stochastic term depends on the geometry of the design.
	\end{remark}
	
	We next record a specialization of the preceding expectation bound under support stability. This result illustrates how the general localized bound simplifies when the support selected by the Lasso is close to a deterministic target support with high probability.
	
	\begin{theorem}[Support-stability specialization]\label{thm:lasso-accurate-selection}
		Let \(\varepsilon \sim \mathcal{N}(0,\sigma^2 I_n)\), and let \(S_0\) denote the true support. For the Lasso--Ridge method with \(\lambda_{\mathrm{R}}=c|E|\), we have
		\begin{align*}
			\mathrm{DMSE}	
			&\geq\frac{\lambda_{\mathrm{L}}}{\sqrt{c}}\left(\frac{\sqrt{c}(2c+1)}{(c+1)^2}\lambda_{\mathrm{L}}\mathbb{P}(E\neq \emptyset)-\sigma(1+\frac{1}{\sqrt{n}})\sqrt{\mathbb{P}(E\ne S_0)}\right).
		\end{align*}
	\end{theorem}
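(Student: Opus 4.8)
The plan is to reuse, essentially verbatim, the algebraic reduction in the proof of Theorem~3.1 up to equation~(3): the closed form $\hat\delta_E=\tfrac{\lambda_{\mathrm L}}{\lambda_{\mathrm R}}\bigl(\tfrac{\Sigma_{n,E}}{\lambda_{\mathrm R}}+I_{|E|}\bigr)^{-1}s$ is valid for any $\lambda_{\mathrm R}>0$, and with the same projection identities it gives
\[
\Delta(\hat\beta_{\mathrm L},\hat\beta_{\mathrm R})=H_{E,s}(\lambda_{\mathrm L},\lambda_{\mathrm R})-\tfrac{2}{n}\langle X_E\hat\delta_E,\varepsilon\rangle ,
\qquad
\mathrm{DMSE}=\mathbb E\,H_{E,s}(\lambda_{\mathrm L},\lambda_{\mathrm R})-\tfrac{2}{n}\mathbb E\langle X_E\hat\delta_E,\varepsilon\rangle .
\]
The first term is bounded below exactly as in Theorem~3.1: monotonicity of $f_1$ together with $\sum_i\mu_i=|E|$ and $\lambda_{\mathrm R}=c|E|$ gives $H_{E,s}\ge\lambda_{\mathrm L}^2\tfrac{2c+1}{(c+1)^2}$ on $\{E\neq\emptyset\}$ and $H_{E,s}=0$ on $\{E=\emptyset\}$, hence $\mathbb E\,H_{E,s}\ge\lambda_{\mathrm L}^2\tfrac{2c+1}{(c+1)^2}\,\mathbb P(E\neq\emptyset)$. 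This step uses no lower bound on $c$, which is why the hypothesis $c>2$ is not needed here; all the remaining work is to upper bound $\tfrac{2}{n}\mathbb E\langle X_E\hat\delta_E,\varepsilon\rangle$.

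Next I record a deterministic bound on $\|X_E\hat\delta_E\|_2$. From the spectral representation already derived in the proof of Theorem~3.1, $\tfrac1n\|X_E\hat\delta_E\|_2^2=\lambda_{\mathrm L}^2\,s^\top P\operatorname{diag}\bigl(\tfrac{\mu_i}{(\mu_i+\lambda_{\mathrm R})^2}\bigr)P^\top s$; using the elementary inequality $\tfrac{\mu}{(\mu+\lambda_{\mathrm R})^2}\le\tfrac1{4\lambda_{\mathrm R}}$, the identity $\|P^\top s\|_2^2=|E|$, and $\lambda_{\mathrm R}=c|E|$, this is at most $\tfrac{\lambda_{\mathrm L}^2}{4c}$ on $\{E\neq\emptyset\}$ and equals $0$ on $\{E=\emptyset\}$, so $\|X_E\hat\delta_E\|_2\le\tfrac{\lambda_{\mathrm L}\sqrt n}{2\sqrt c}$ deterministically.

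The core of the argument is to subtract a deterministic copy of $X_E\hat\delta_E$ on the recovery event. Let $\mathcal A$ be the event on which the Lasso selects exactly $S_0$ with the correct signs; on $\mathcal A$ both $E=S_0$ and $s=\operatorname{sign}(\beta_{0,S_0})$ are nonrandom, so $X_E\hat\delta_E$ coincides on $\mathcal A$ with the fixed vector $w:=X_{S_0}\hat\delta^\star$, where $\hat\delta^\star$ is the closed form evaluated at $E=S_0$, $s=\operatorname{sign}(\beta_{0,S_0})$, $\lambda_{\mathrm R}=c|S_0|$, and $\|w\|_2\le\tfrac{\lambda_{\mathrm L}\sqrt n}{2\sqrt c}$ by the previous step. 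Splitting according to $\mathcal A$ and using $\mathbb E\varepsilon=0$,
\[
\mathbb E\langle X_E\hat\delta_E,\varepsilon\rangle
=\langle w,\mathbb E[\varepsilon\mathbf 1_{\mathcal A}]\rangle+\mathbb E[\langle X_E\hat\delta_E,\varepsilon\rangle\mathbf 1_{\mathcal A^c}]
=-\langle w,\mathbb E[\varepsilon\mathbf 1_{\mathcal A^c}]\rangle+\mathbb E[\langle X_E\hat\delta_E,\varepsilon\rangle\mathbf 1_{\mathcal A^c}] .
\]
For the first summand I use the isotropy bound $\|\mathbb E[\varepsilon\mathbf 1_B]\|_2\le\sigma\sqrt{\mathbb P(B)}$ (apply Cauchy--Schwarz to $\|v\|_2^2=\mathbb E[\langle v,\varepsilon\rangle\mathbf 1_B]$ with $v=\mathbb E[\varepsilon\mathbf 1_B]$), giving $|\langle w,\mathbb E[\varepsilon\mathbf 1_{\mathcal A^c}]\rangle|\le\tfrac{\lambda_{\mathrm L}\sqrt n}{2\sqrt c}\,\sigma\sqrt{\mathbb P(\mathcal A^c)}$. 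For the second, Cauchy--Schwarz in $\mathbb R^n$ and then in $L^2$ give $\mathbb E[\langle X_E\hat\delta_E,\varepsilon\rangle\mathbf 1_{\mathcal A^c}]\le\tfrac{\lambda_{\mathrm L}\sqrt n}{2\sqrt c}\bigl(\mathbb E\|\varepsilon\|_2^2\bigr)^{1/2}\sqrt{\mathbb P(\mathcal A^c)}=\tfrac{\lambda_{\mathrm L} n\sigma}{2\sqrt c}\sqrt{\mathbb P(\mathcal A^c)}$. Adding and multiplying by $2/n$ yields $\tfrac2n\mathbb E\langle X_E\hat\delta_E,\varepsilon\rangle\le\tfrac{\lambda_{\mathrm L}\sigma}{\sqrt c}\bigl(1+\tfrac1{\sqrt n}\bigr)\sqrt{\mathbb P(\mathcal A^c)}$; combining with the lower bound on $\mathbb E\,H_{E,s}$, identifying $\mathbb P(\mathcal A^c)$ with $\mathbb P(E\neq S_0)$, and factoring out $\lambda_{\mathrm L}/\sqrt c$ gives the claim.

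The step I expect to be delicate is the identification of $\mathcal A$. What the argument genuinely needs is an event on which $\hat\delta_E$ is nonrandom, which requires recovery of both the support \emph{and} the signs, whereas the statement is phrased through $\mathbb P(E\neq S_0)$; strictly, the displayed chain proves the bound with $\mathbb P(\mathcal A^c)=\mathbb P(\text{exact sign recovery fails})\ge\mathbb P(E\neq S_0)$, so matching the stated inequality relies on the regime-dependent fact that, under symmetric Gaussian noise with $\mathbb P(E=S_0)$ close to one (hence a $\beta$-min-type condition in force), exact support recovery of the Lasso is accompanied by correct sign recovery on the same event. This is the point to state carefully, e.g.\ by taking $\mathcal A=\{E=S_0\}$ and invoking the standard primal--dual-witness description of Lasso sign recovery. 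Everything else is routine: the $H_{E,s}$ estimate is copied from Theorem~3.1, and the noise estimates use only $\mathbb E\varepsilon=0$ and $\mathbb E\varepsilon\varepsilon^\top=\sigma^2 I_n$, so Gaussianity enters only through the cited recovery result.
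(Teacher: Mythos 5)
Your proposal is correct and follows essentially the same route as the paper's proof in Appendix~B: the same reduction to \(\mathbb{E}H_{E,s}-\tfrac{2}{n}\mathbb{E}\langle X_E\hat\delta_E,\varepsilon\rangle\) with the same lower bound on the first term, the same deterministic bound \(\tfrac{1}{n}\|X_E\hat\delta_E\|_2^2\le\lambda_{\mathrm L}^2/(4c)\), and the same device of subtracting the fixed vector \(w\) on the recovery event and applying Cauchy--Schwarz on its complement, yielding identical constants. The delicate point you flag is genuine but is precisely the assumption the paper also makes implicitly --- it declares the equicorrelation sign vector to be the deterministic \(\operatorname{sign}(\beta_{0,S_0})\) on \(\{E=S_0\}\) without justification --- so your more careful formulation via the exact sign-recovery event \(\mathcal A\) (at the cost of \(\mathbb P(\mathcal A^c)\ge\mathbb P(E\ne S_0)\) unless sign recovery is argued to coincide with support recovery) is, if anything, a sharpening of the paper's argument rather than a deviation from it.
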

	
	\begin{remark}
		This theorem shows that when the Lasso selects the true support with high probability, its prediction performance can be improved by a suitable refinement over a wide range of tuning parameters. It is known that variable selection consistency may occur when the Lasso tuning parameter does not vanish; see, for example, \cite{lahiri2021necessary}. Moreover, under standard sign-consistency conditions such as the irrepresentable condition together with a beta-min assumption, the model selection error probability \(\mathbb{P}(E \neq S_0)\) can become exponentially small; see \cite{zhao2006model}. In such settings, the present theorem implies that prediction improvement can be achieved. For completeness, the proof is given in Appendix~B.
	\end{remark}
	
	\begin{remark}[Support stability beyond exact recovery]
		Theorem~\ref{thm:lasso-accurate-selection} does not require exact support recovery, i.e., \(\mathbb{P}(E = S_0) \to 1\).
		It suffices that the Lasso-selected support coincides with a fixed index set \(S^\star\) with high probability, for instance, the support obtained in the noiseless case. Consequently, the refinement step can exploit this support stability to yield prediction improvement.
	\end{remark}

	\section{Discussion and Possible Extensions}
	\label{sec:discussion-extensions}

	\paragraph{Extensions to \(\ell_{1}\)-based methods.}
	The analysis developed in this paper is not specific to the classical Lasso, but depends more broadly on structural features shared by \(\ell_{1}\)-based procedures. At a high level, the extension follows the same general template: identify an intrinsic active or equicorrelation-type set induced by the regularization, perform the refitting step on this set, and derive a prediction dominance inequality from the definition of the second step, after which the main DMSE framework can be applied on the resulting intrinsic set.
	
	This perspective is expected to extend, at least formally, to other \(\ell_{1}\)-based methods such as the Dantzig selector \cite{candes2007dantzig} and the Adaptive Lasso \cite{zou2006adaptive}. For the Dantzig selector, one may analogously introduce an intrinsic active set associated with the constraint structure, while for the Adaptive Lasso with fixed weights, the same geometric decomposition can be adapted to the weighted \(\ell_{1}\) setting, leading to a corresponding equicorrelation-type structure. A complete extension to these methods, especially for a precise DMSE analysis, requires additional technical developments and is therefore left for future work. Next, we discuss extensions to martingale noise structures.
	
	\paragraph{Martingale noise.}
	As the DMSE lower-bound argument in Section~\ref{subsec:expectation-lower-bounds} shows, this part of the analysis does not fundamentally rely on independence or Gaussianity of the noise. In particular, the framework also accommodates martingale difference noise under suitable moment or conditional variance assumptions. Consider the fixed-design linear regression model \(y_i = x_i^\top \beta_0 + \varepsilon_i\), \(i=1,\dots,n\), where \(X \in \mathbb{R}^{n\times p}\) is deterministic with normalized columns \(\|X_j\|_2^2 = n\) for all \(j=1,\dots,p\), and \((\varepsilon_i,\mathcal F_i)_{i=1}^n\) is a martingale difference sequence satisfying \(\mathbb{E}(\varepsilon_i \mid \mathcal F_{i-1}) = 0\) and \(\mathbb{E}(\varepsilon_i^2 \mid \mathcal F_{i-1}) \le \sigma^2\) almost surely. Assume further that the design obeys a mild regularity condition such as \(\max_{i,j}|x_{ij}| \le K\). Then, for any index set \(T\) and each \(j \in T\),
	\[
	(X_T^\top \varepsilon)_j = \sum_{i=1}^n x_{ij}\varepsilon_i
	\]
	is a weighted sum of martingale differences. Hence, by Theorem~A1.1.6 in \citep[p.~219]{nishiyama2021martingale}, \(\mathbb{E}\|X_T^\top \varepsilon / n\|_\infty\) is of order \(\sqrt{\log |T|/n}\), up to constants. Substituting such a maximal inequality into Theorem~\ref{thm:lasso-dmse-lower-bound} yields a corresponding range of tuning parameters for which the Lasso may exhibit suboptimal prediction performance under martingale noise. Such settings arise naturally in heteroskedastic regression, econometrics, and time-series models with predictable conditional variances.
	
	\section{Conclusions}
	\label{sec:conclusions}
	
	This paper revisits the tuning paradigm for the Lasso from the perspective of prediction risk dominance. In particular, we propose tuning ranges in which suboptimality arises; more specifically, the resulting predictor can be dominated by refitting on the Lasso-selected support, both on high-probability events and in mean squared prediction error. From this perspective, the results provide a quantitative explanation for empirical observations that smaller tuning parameters may perform favorably in correlated designs and related settings, and that refitting on the selected support of the Lasso may improve prediction performance.
	
	Although the Lasso serves as the primary reference point, the arguments rely mainly on the structure of \(\ell_1\)-regularization and the associated prediction error and DMSE decompositions, rather than on features unique to a specific estimator. Extending the analysis to more general settings, such as nonlinear models, is left for future work. Developing new frameworks for the standard linear regression model is also an interesting direction.

	\newpage
	
	\appendix

	\section{Proofs of Technical Lemmas}\label{appA}
	\begin{lemma}\label{lem:symm-spec-inf}
		Let \(A \in \mathbb{R}^{n \times n}\) be a symmetric matrix. Then its spectral norm satisfies
		\begin{align*}
			\|A\|_2\leq\|A\|_\infty = \max_{1\leq i\leq n}\sum_{j=1}^n |A_{ij}|.
		\end{align*}
	\end{lemma}
	
	\begin{proof}
		Applying the inequality \(\|A\|_2 \leq \sqrt{\|A\|_1\|A\|_\infty}\) (see \cite[Section~3.11.4, p.~194]{gentle2024matrix}) to the symmetric matrix \(A\), and noting that \(\|A\|_1=\|A\|_\infty\) since \(A\) is symmetric, completes the proof, where \( \|A\|_1 \) represents the maximum absolute column sum of \( A \), that is, \(\|A\|_1 := \max_{1\leq j \leq n} \sum_{i=1}^n |A_{ij}|\).
	\end{proof}
	
	\begin{lemma}[Neumann Serious Expansion]\label{lem:neumann-series}
		Let \(A \in \mathbb{R}^{d \times d}\) with \(\|A\|_\infty < 1\). Then we have
		\begin{align*}
			(I - A)^{-1} = \sum_{k=0}^{\infty} A^k.
		\end{align*}
		For more details, see, for example, \cite[Section~3.11.6, p.~197]{gentle2024matrix}.
	\end{lemma}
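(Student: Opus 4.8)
The plan is to establish convergence of the series first, and then verify that its sum is a two-sided inverse of $I - A$ by means of a telescoping identity. The argument is the standard Neumann-series proof, adapted to the specific norm $\|\cdot\|_\infty$ used in the statement.

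First I would recall that $\|\cdot\|_\infty$ here is the matrix norm induced by the vector $\ell_\infty$-norm, and hence submultiplicative: $\|BC\|_\infty \le \|B\|_\infty \|C\|_\infty$ for all $B,C \in \mathbb{R}^{d\times d}$. In particular $\|A^k\|_\infty \le \|A\|_\infty^k$ for every $k \ge 0$. Since $\|A\|_\infty < 1$, the geometric series $\sum_{k\ge 0} \|A\|_\infty^k$ converges, so the partial sums $S_N := \sum_{k=0}^N A^k$ form a Cauchy sequence in $(\mathbb{R}^{d\times d},\|\cdot\|_\infty)$; as this space is finite-dimensional, hence complete, the sequence $S_N$ converges to some limit $S := \sum_{k=0}^\infty A^k$.

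Next I would exploit the purely algebraic telescoping identities $(I-A)S_N = I - A^{N+1}$ and $S_N(I-A) = I - A^{N+1}$, which follow by direct expansion of the finite sum. Because $\|A^{N+1}\|_\infty \le \|A\|_\infty^{N+1} \to 0$ as $N\to\infty$, and left/right multiplication by the fixed matrix $I-A$ is continuous with respect to $\|\cdot\|_\infty$, letting $N \to \infty$ gives $(I-A)S = I$ and $S(I-A) = I$. Hence $I - A$ is invertible with $(I-A)^{-1} = S = \sum_{k=0}^\infty A^k$, which is the claim.

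I do not anticipate a genuine obstacle: the only points that require attention are the submultiplicativity of $\|\cdot\|_\infty$ (standard for an operator norm induced by a vector norm) and the completeness of $\mathbb{R}^{d\times d}$ under this norm (immediate from finite-dimensionality and equivalence of norms), both of which may be cited as standard facts, so the proof is essentially a short routine verification.
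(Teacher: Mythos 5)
Your proof is correct and is the standard Neumann-series argument (submultiplicativity of the induced $\ell_\infty$ operator norm, convergence of the partial sums by completeness, and the telescoping identity $(I-A)S_N = I - A^{N+1}$). The paper does not prove this lemma itself but defers to a textbook citation, and your argument is exactly the standard proof that reference contains, so there is nothing to compare beyond noting full agreement.
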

	
	\begin{lemma}[Closed-form Representation under the KKT Conditions]\label{lem:kkt-closed-form}
		When \(E \neq \emptyset\), the nonzero entries of \(\hat{\delta}\) admit a simple expression:
		\begin{align*}
			\hat{\delta}_{E}=\frac{\lambda_{\mathrm{L}}}{\lambda_{\mathrm{R}}}(\frac{\Sigma_{n,E}}{\lambda_{\mathrm{R}}}+ I_{|E|})^{-1}s.
		\end{align*}
	\end{lemma}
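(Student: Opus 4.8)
The plan is to recognize the vector $\hat\delta$ as the solution of an ordinary ridge regression on the active block $X_E$, write down its first-order optimality condition, and then substitute the Lasso KKT identity on $E$ to solve the resulting linear system.

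First I would observe that on the feasible set $\{\delta : \delta_{-E}=0\}$ one has $X\delta = X_E\delta_E$ and $\|\delta\|_2^2 = \|\delta_E\|_2^2$, so the constrained problem defining $\hat\delta$ is equivalent to the unconstrained minimization
\[
\hat\delta_E \;=\; \arg\min_{\delta_E \in \mathbb{R}^{|E|}}\left\{\frac{1}{2n}\bigl\|(y - X\hat\beta_{\mathrm L}) - X_E\delta_E\bigr\|_2^2 + \frac{\lambda_{\mathrm R}}{2}\|\delta_E\|_2^2\right\},
\]
together with $\hat\delta_{-E}=0$. This objective is strictly convex — because $\lambda_{\mathrm R}>0$ under the choice $\lambda_{\mathrm R}=c|E|$ with $E\neq\emptyset$, and in any event because $\operatorname{null}(X_E)=\{0\}$ by Proposition~\ref{uniqueness} — hence the minimizer is the unique stationary point. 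Setting the gradient to zero gives
\[
-\frac{1}{n}X_E^{\top}\bigl((y - X\hat\beta_{\mathrm L}) - X_E\hat\delta_E\bigr) + \lambda_{\mathrm R}\hat\delta_E = 0,
\qquad\text{i.e.}\qquad
(\Sigma_{n,E} + \lambda_{\mathrm R} I_{|E|})\,\hat\delta_E = \frac{1}{n}X_E^{\top}(y - X\hat\beta_{\mathrm L}).
\]

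The key step is then to invoke the active-block KKT identity for the Lasso (Proposition~2.3), which states precisely that $\frac{1}{n}X_E^{\top}(y - X\hat\beta_{\mathrm L}) = \lambda_{\mathrm L} s$. Since $\Sigma_{n,E}\succeq 0$ and $\lambda_{\mathrm R}>0$ (equivalently, since $X_E$ has full column rank), the matrix $\Sigma_{n,E} + \lambda_{\mathrm R} I_{|E|}$ is invertible, so $\hat\delta_E = \lambda_{\mathrm L}(\Sigma_{n,E} + \lambda_{\mathrm R} I_{|E|})^{-1}s$; factoring $\lambda_{\mathrm R}$ out of the inverse yields the stated form $\hat\delta_E = \frac{\lambda_{\mathrm L}}{\lambda_{\mathrm R}}\bigl(\frac{\Sigma_{n,E}}{\lambda_{\mathrm R}} + I_{|E|}\bigr)^{-1}s$. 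There is no substantial obstacle: the only points needing care are the reduction to an unconstrained problem (immediate from $X\delta = X_E\delta_E$ on the feasible set) and the invertibility of $\Sigma_{n,E} + \lambda_{\mathrm R} I_{|E|}$, both of which follow directly from the standing assumptions; if one prefers to avoid differentiating the quadratic loss, completing the square in $\delta_E$ gives the same identity.
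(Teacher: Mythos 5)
Your argument is correct and follows essentially the same route as the paper: both reduce the constrained problem to a ridge regression on the active block $X_E$ with response $y - X\hat\beta_{\mathrm L}$, substitute the active-block KKT identity $\tfrac{1}{n}X_E^{\top}(y - X\hat\beta_{\mathrm L}) = \lambda_{\mathrm L}s$, and factor $\lambda_{\mathrm R}$ out of the inverse. The only cosmetic difference is that you derive the ridge closed form from the first-order condition, whereas the paper cites its Proposition~2.8 directly.
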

	\begin{proof}
		Following from the second definition of our refitting step, we have
		\begin{align*}
			\hat{\delta} = \underset{\delta \in \mathbb{R}^ {p},\delta_{-E}=0}{\arg\min} \left\{ \frac{1}{2n} \| y - \hat{\beta}_{\mathrm{L}}-X_E\delta_E  \|_2^2 +  \frac{\lambda_{\mathrm{R}}}{2}\| \delta_E \|_2^2 \right\},
		\end{align*}
		So when \(E \neq \emptyset\), \(\hat{\delta}_{E}\) could be regarded as a simple ridge regression estimator:
		\begin{align*}
			\hat{\delta}_{E} = \underset{\delta \in \mathbb{R}^ {|E|}}{\arg\min} \left\{ \frac{1}{2n} \| y - X\hat{\beta}_{\mathrm{L}}-X_E\delta_E  \|_2^2 +  \frac{\lambda_{\mathrm{R}}}{2}\| \delta_E \|_2^2 \right\}.
		\end{align*}
		Recall the closed from representation under the KKT conditions of a ridge regression model in Proposition~\ref{prop:ridge-estimator}, we derive
		\begin{align*}
			\hat{\delta}_{E}=\frac{1}{n}(\frac{1}{n}X_E^{\top}X_E+\lambda_{\mathrm{R}} I_{|E|})^{-1}X_E^{\top}(y-X\hat{\beta}_{\mathrm{L}}).		
		\end{align*}
		From the KKT conditions of the Lasso step, we obtain the following result:
		\begin{align*}
			\frac{1}{n}X_E^{\top}(y-X\hat{\beta}_{\mathrm{L}})=\lambda_{\mathrm{L}}s.
		\end{align*}
		Combining the previous two equations. Also replacing \(X_E^{\top}X_E/n\) with \(\Sigma_{n,E}\) and factoring \( \lambda_{\mathrm{R}} \) out of the inverse completes the proof.
	\end{proof}

	\begin{lemma}[Alternative Form of \(\|\hat{ \delta}_E\|_1 \)]\label{lem:deltaE-l1-alt-form} 
		Consider the case where \(E \neq \emptyset\) and \(\lambda_{\mathrm{R}}>2\|\Sigma_{n,E}\|_\infty\). We have \(\|\hat{ \delta}_E\|_1=s^\top\hat{ \delta}_E\).
	\end{lemma}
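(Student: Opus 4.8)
The plan is to reduce the claim to a coordinatewise sign statement. Writing $s\in\{-1,+1\}^{|E|}$ for the equicorrelation sign vector, one has $\|x\|_1-s^\top x=\sum_{i}\bigl(|x_i|-s_ix_i\bigr)$ with every summand nonnegative, so $\|x\|_1=s^\top x$ holds if and only if $s_i x_i\ge 0$ for every $i$. Hence it suffices to show $s_i(\hat\delta_E)_i\ge 0$ for each $i\in E$; summing then gives $\|\hat\delta_E\|_1=\sum_i s_i(\hat\delta_E)_i=s^\top\hat\delta_E$.

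To obtain the sign alignment, I would start from the closed form of Lemma~A.2, namely $\hat\delta_E=\tfrac{\lambda_{\mathrm{L}}}{\lambda_{\mathrm{R}}}\bigl(\tfrac{\Sigma_{n,E}}{\lambda_{\mathrm{R}}}+I_{|E|}\bigr)^{-1}s$, which after multiplying by $(\Sigma_{n,E}+\lambda_{\mathrm{R}}I_{|E|})$ is equivalent to the fixed-point identity
\[
\hat\delta_E=\frac{\lambda_{\mathrm{L}}}{\lambda_{\mathrm{R}}}\,s-\frac{1}{\lambda_{\mathrm{R}}}\,\Sigma_{n,E}\hat\delta_E .
\]
Taking the $i$-th coordinate and multiplying by $s_i$ gives $s_i(\hat\delta_E)_i=\tfrac{\lambda_{\mathrm{L}}}{\lambda_{\mathrm{R}}}-\tfrac{1}{\lambda_{\mathrm{R}}}s_i(\Sigma_{n,E}\hat\delta_E)_i$, so it is enough to prove $|(\Sigma_{n,E}\hat\delta_E)_i|\le\lambda_{\mathrm{L}}$ for every $i$. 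The deterministic ingredient here is $\|\Sigma_{n,E}\|_\infty\le|E|$: by Assumption~\ref{ass:normalization} and Cauchy--Schwarz every entry satisfies $|(\Sigma_{n,E})_{ij}|=|X_i^\top X_j|/n\le 1$, so each absolute row sum is at most $|E|$.

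It then remains to bound $\|\hat\delta_E\|_\infty$ self-consistently. Applying $\|\cdot\|_\infty$ to the fixed-point identity and using $\|\Sigma_{n,E}\hat\delta_E\|_\infty\le\|\Sigma_{n,E}\|_\infty\|\hat\delta_E\|_\infty\le|E|\,\|\hat\delta_E\|_\infty$ yields $\|\hat\delta_E\|_\infty\le\tfrac{\lambda_{\mathrm{L}}}{\lambda_{\mathrm{R}}}+\tfrac{|E|}{\lambda_{\mathrm{R}}}\|\hat\delta_E\|_\infty$; since $\lambda_{\mathrm{R}}>2|E|$ the factor $|E|/\lambda_{\mathrm{R}}$ is strictly below $1/2$, so the finite quantity $\|\hat\delta_E\|_\infty$ can be solved out to give $\|\hat\delta_E\|_\infty\le\lambda_{\mathrm{L}}/(\lambda_{\mathrm{R}}-|E|)$. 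Therefore $|(\Sigma_{n,E}\hat\delta_E)_i|\le|E|\,\|\hat\delta_E\|_\infty\le|E|\lambda_{\mathrm{L}}/(\lambda_{\mathrm{R}}-|E|)\le\lambda_{\mathrm{L}}$, the last inequality being exactly the content of $\lambda_{\mathrm{R}}\ge 2|E|$, and plugging back gives $s_i(\hat\delta_E)_i\ge\lambda_{\mathrm{L}}/\lambda_{\mathrm{R}}-\lambda_{\mathrm{L}}/\lambda_{\mathrm{R}}=0$, as required. I do not expect a real obstacle: the only point needing care is the self-consistent $\ell_\infty$ estimate, and the threshold $\lambda_{\mathrm{R}}>2|E|$ is precisely what makes both $|E|/\lambda_{\mathrm{R}}<1$ (so the estimate can be rearranged) and $|E|/(\lambda_{\mathrm{R}}-|E|)\le 1$ (so the per-coordinate bound closes). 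One could alternatively phrase the sign alignment as the entrywise positivity of $(I+\Sigma_{n,E}/\lambda_{\mathrm{R}})^{-1}\mathbf 1$ after conjugating by $\operatorname{diag}(s)$, but the coordinatewise route above seems cleanest.
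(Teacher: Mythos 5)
Your proof is correct, and it reaches the same key reduction as the paper --- showing that $\hat\delta_E$ is signwise aligned with $s$ --- but by a genuinely different mechanism. The paper invokes the Neumann series (its Lemma~A.1): it expands $(\Sigma_{n,E}/\lambda_{\mathrm{R}}+I)^{-1}s = s + \bigl(\sum_{k\ge 1}(-1)^k(\Sigma_{n,E}/\lambda_{\mathrm{R}})^k\bigr)s$ and shows the perturbation has $\ell_\infty$ norm strictly below $1$ by summing the geometric series $\sum_{k\ge1}(1/2)^k$, using $\|\Sigma_{n,E}/\lambda_{\mathrm{R}}\|_\infty<1/2$. You instead multiply the closed form through by $(\Sigma_{n,E}+\lambda_{\mathrm{R}}I)$ to get the fixed-point identity $\hat\delta_E=\tfrac{\lambda_{\mathrm{L}}}{\lambda_{\mathrm{R}}}s-\tfrac{1}{\lambda_{\mathrm{R}}}\Sigma_{n,E}\hat\delta_E$, bound $\|\hat\delta_E\|_\infty$ self-consistently, and close the per-coordinate estimate $|(\Sigma_{n,E}\hat\delta_E)_i|\le\lambda_{\mathrm{L}}$. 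The two arguments rest on exactly the same quantitative ingredients ($|(\Sigma_{n,E})_{ij}|\le1$ under column normalization, hence $\|\Sigma_{n,E}\|_\infty\le|E|$, and the threshold $\lambda_{\mathrm{R}}>2|E|$), and your self-consistent inequality is essentially the closed-form summation of the paper's series; what your version buys is the avoidance of any infinite-series machinery, plus a slightly more transparent accounting of where $2|E|$ enters (once to make the fixed-point estimate solvable, once to close the coordinatewise bound). All steps check out: the rearrangement of $\|\hat\delta_E\|_\infty\le\tfrac{\lambda_{\mathrm{L}}}{\lambda_{\mathrm{R}}}+\tfrac{|E|}{\lambda_{\mathrm{R}}}\|\hat\delta_E\|_\infty$ is legitimate because $\hat\delta_E$ is a finite vector and $|E|/\lambda_{\mathrm{R}}<1/2$, and the reduction of $\|x\|_1=s^\top x$ to $s_i x_i\ge0$ for all $i$ is exactly right. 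As a minor aside, you also implicitly correct a typo in the paper's own proof, which at one point writes the prefactor as $\lambda_{\mathrm{R}}/\lambda_{\mathrm{L}}$ rather than $\lambda_{\mathrm{L}}/\lambda_{\mathrm{R}}$.
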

	
	\begin{proof}
		Notice that it suffices to show that  \(\operatorname{sign}(s) = \operatorname{sign}(\hat{\delta}_{E})\). According to Lemma~\ref{lem:kkt-closed-form}, we have
		\begin{align*}
			\hat{\delta}_{E}=\frac{\lambda_{\mathrm{R}}}{\lambda_{\mathrm{L}}}(\frac{\Sigma_{n,E}}{\lambda_{\mathrm{R}}}+ I_{|E|})^{-1}s.
		\end{align*}
		Thus, the problem can be reformulated as follows:
		\begin{align*}
			\operatorname{sign}((\frac{\Sigma_{n,E}}{\lambda_{\mathrm{R}}}+ I_{|E|})^{-1}s) = \operatorname{sign}(s).
		\end{align*}
		When \(\lambda_{\mathrm{R}} > 2\|\Sigma_{n,E}\|_\infty\), we have
		\begin{align*}
			\|\frac{\Sigma_{n,E}}{\lambda_{\mathrm{R}}}\|_\infty<\frac{1}{2}.
		\end{align*}
		Applying Neumann series expansion (Lemma~\ref{lem:neumann-series}), we get
		\begin{align*}
			(\frac{\Sigma_{n,E}}{\lambda_{\mathrm{R}}}+ I_{|E|})^{-1}=\sum_{k=0}^\infty (-1)^k (\frac{\Sigma_{n,E}}{\lambda_{\mathrm{R}}})^k=I_{|E|}+\sum_{k=1}^\infty (-1)^k (\frac{\Sigma_{n,E}}{\lambda_{\mathrm{R}}})^k.	
		\end{align*}
		Thus, we obtain
		\begin{align*}
			(\frac{\Sigma_{n,E}}{\lambda_{\mathrm{R}}}+ I_{|E|})^{-1}s=s+(\sum_{k=1}^\infty (-1)^k (\frac{\Sigma_{n,E}}{\lambda_{\mathrm{R}}})^k)s,	
		\end{align*}
		and our target is reduced to showing that
		\begin{align*}
			\operatorname{sign}(s+(\sum_{k=1}^\infty (-1)^k (\frac{\Sigma_{n,E}}{\lambda_{\mathrm{R}}})^k)s) = \operatorname{sign}(s).
		\end{align*}
		If each entry of the second term on the left-hand side has absolute value less than \(1\), then the preceding equality follows. Hence, it suffices to prove that
		\begin{align*}
			\|(\sum_{k=1}^\infty (-1)^k (\frac{\Sigma_{n,E}}{\lambda_{\mathrm{R}}})^k)s\|_\infty<1.
		\end{align*}
		Noting that \(s\) is a sign vector, we have
		\begin{align*}
			\|(\sum_{k=1}^\infty (-1)^k (\frac{\Sigma_{n,E}}{\lambda_{\mathrm{R}}})^k)s\|_\infty &\leq \|(\sum_{k=1}^\infty (-1)^k (\frac{\Sigma_{n,E}}{\lambda_{\mathrm{R}}})^k)\|_\infty \leq \sum_{k=1}^\infty\| (-1)^k (\frac{\Sigma_{n,E}}{\lambda_{\mathrm{R}}})^k\|_\infty\\
			&\leq \sum_{k=1}^\infty\| (\frac{\Sigma_{n,E}}{\lambda_{\mathrm{R}}})\|_\infty^k<\sum_{k=1}^\infty(\frac{1}{2})^k=1.	
		\end{align*}
	\end{proof}
	\begin{remark}\label{rem:lambdaR-stronger-implies-sigma-bound}
		The stronger condition \(\lambda_{\mathrm{R}}>2|E|\) implies \(\lambda_{\mathrm{R}}>2\|\Sigma_{n,E}\|_\infty\), since
		\[
		\|\Sigma_{n,E}\|_\infty=\max_{i\in E}\sum_{j\in E}|(\Sigma_{n,E})_{ij}|\leq |E|,
		\]
		where each \(|(\Sigma_{n,E})_{ij}|\leq 1\).
	\end{remark}
	\begin{lemma}\label{lem:gaussian-infty-sq-bound}
		Suppose that \(\varepsilon \sim N(0,\sigma^{2}I_n)\) and 
		\(\|X_j\|_{2}^{2}=n\) for all \(j\). Then
		\(
		\mathbb{E}[\|X^{\top}\varepsilon/n\|_{\infty}^{2}]
		\leq \sigma^{2}(2\log(4p)+1)/n.
		\)
	\end{lemma}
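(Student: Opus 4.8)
The plan is to reduce the claim to a union bound over \(p\) univariate Gaussian tails, and then to recover the expectation via the layer-cake (tail-integral) formula. First I would set \(Z_j := X_j^{\top}\varepsilon/n\) and observe that, by the column-norm hypothesis, each \(Z_j\) is centered Gaussian with variance \(\sigma^2\|X_j\|_2^2/n^2 = \sigma^2/n\); write \(\tau^2 := \sigma^2/n\). Since \(\|X^{\top}\varepsilon/n\|_{\infty}^2 = \max_{1\le j\le p} Z_j^2\), it suffices to bound \(\mathbb{E}[\max_j Z_j^2]\).

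Next I would establish the pointwise tail estimate \(\mathbb{P}(\max_j Z_j^2 > t) \le \min\{1,\; 2p\,e^{-t/(2\tau^2)}\}\) for all \(t\ge 0\). This comes from the union bound \(\mathbb{P}(\max_j Z_j^2 > t) \le \sum_{j} \mathbb{P}(|Z_j| > \sqrt t) = \sum_j 2\,\mathbb{P}(Z_j > \sqrt t)\), together with the standard sub-Gaussian tail bound \(\mathbb{P}(Z_j > a) \le e^{-a^2/(2\tau^2)}\) for \(a\ge 0\); the trivial bound by \(1\) handles small \(t\).

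Then I would apply \(\mathbb{E}[\max_j Z_j^2] = \int_0^\infty \mathbb{P}(\max_j Z_j^2 > t)\,dt\), split the integral at a threshold \(t_0\ge 0\), bound the integrand by \(1\) on \([0,t_0]\) and by \(2p\,e^{-t/(2\tau^2)}\) on \([t_0,\infty)\), and evaluate \(\int_{t_0}^\infty 2p\,e^{-t/(2\tau^2)}\,dt = 4p\tau^2 e^{-t_0/(2\tau^2)}\). Choosing \(t_0 = 2\tau^2\log(4p)\) makes \(e^{-t_0/(2\tau^2)} = 1/(4p)\), so the residual mass collapses to exactly \(\tau^2\), yielding \(\mathbb{E}[\max_j Z_j^2] \le 2\tau^2\log(4p) + \tau^2 = \tau^2(2\log(4p)+1)\). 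Substituting \(\tau^2 = \sigma^2/n\) gives the stated inequality.

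The argument is essentially routine; the only point demanding care is the calibration of the truncation level \(t_0\) so that the leftover exponential tail contributes precisely the additive constant \(1\) rather than some larger absolute constant, along with correct bookkeeping of the factor \(2\) arising from \(|Z_j|\) versus \(Z_j\) in the union bound. An alternative route through the moment generating function \(\mathbb{E}[e^{\lambda Z_j^2}] = (1-2\lambda\tau^2)^{-1/2}\) and Jensen's inequality also works, but with a fixed \(\lambda\) it loses a factor of two in the coefficient of \(\log p\), so the tail-integral computation is preferable for matching the constant in the statement.
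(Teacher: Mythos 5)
Your proposal is correct and follows essentially the same route as the paper's proof: a union bound over the $p$ Gaussian tails, a tail-integral (layer-cake) representation of the second moment, and a truncation calibrated at $e^{-t_0/(2\tau^2)}=1/(4p)$ so that the residual integral contributes exactly the additive constant. The only cosmetic difference is that you integrate the tail of $\max_j Z_j^2$ directly while the paper writes $\mathbb{E}[M^2]=\int_0^\infty 2t\,\mathbb{P}(M\geq t)\,dt$ for the unnormalized maximum, which is the same computation after a change of variables.
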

	\begin{proof}
		For each \(j\),
		\begin{align*}
			\mathbb{P}(|X_j^{\top}\varepsilon|\geq t)
			&\leq 2\exp(-t^{2}/(2\sigma^{2}n)).
		\end{align*}
		Define \(M=\|X^{\top}\varepsilon\|_{\infty}\). Then
		\begin{align*}
			\mathbb{P}(M\geq t)
			&\leq 2p\exp(-t^{2}/(2\sigma^{2}n)).
		\end{align*}
		Using \(\mathbb{E}[M^{2}] = \int_{0}^{\infty} 2t\,\mathbb{P}(M\geq t)\,dt\),
		let \(t_{0}=\sigma\sqrt{n}\sqrt{2\log(4p)}\). Then
		\begin{align*}
			\int_{0}^{t_{0}} 2t\,\mathbb{P}(M\geq t)\,dt
			&\leq t_{0}^{2}
			= 2\sigma^{2}n\log(4p), \\
			\int_{t_{0}}^{\infty} 2t\,\mathbb{P}(M\geq t)\,dt
			&\leq 4p \int_{t_{0}}^{\infty} 
			t\exp(-t^{2}/(2\sigma^{2}n))\,dt \\
			&= 4p\,\sigma^{2}n \int_{\log(4p)}^{\infty} e^{-u}\,du \\
			&= \sigma^{2}n .
		\end{align*}
		Combining these two inequalities, we have
		\begin{align*}
			\mathbb{E}[M^{2}]
			&\leq \sigma^{2}n \,(2\log(4p)+1),
		\end{align*}
		which concludes the proof.
	\end{proof}
	
	\begin{lemma}\label{lem:eventE-empty-prob}
		Assume that \(\varepsilon \sim \mathcal N(0,\sigma^2 I_n)\). Then
		\begin{align*}
			\mathbb P(E=\emptyset)
			\leq
			\Phi\!\left(
			\frac{\lambda_{\mathrm L}-\|\Sigma_n \beta_0\|_\infty}{\sigma/\sqrt n}
			\right),
		\end{align*}
		where \(\Sigma_n := X^\top X/n\).
	\end{lemma}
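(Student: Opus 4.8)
The plan is to reduce the event $\{E=\emptyset\}$ to a one-dimensional Gaussian tail bound. First I would identify $\{E=\emptyset\}$ with $\{\hat\beta_{\mathrm L}=0\}$: by Proposition~\ref{uniqueness}, under Assumption~\ref{ass:general-position} the Lasso solution is unique with support exactly $E$, so $E=\emptyset$ forces $\hat\beta_{\mathrm L}=0$. On that event the KKT identity $\tfrac1n X^\top(y-X\hat\beta_{\mathrm L})=\lambda_{\mathrm L}\bar z$ with $\|\bar z\|_\infty\le1$ becomes $\tfrac1n X^\top y=\lambda_{\mathrm L}\bar z$, whence $\|\tfrac1n X^\top y\|_\infty\le\lambda_{\mathrm L}$. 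Hence $\{E=\emptyset\}\subseteq\{\|\tfrac1n X^\top y\|_\infty\le\lambda_{\mathrm L}\}$, and this inclusion is all that is needed (equality in fact holds).

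Next I would plug in $y=X\beta_0+\varepsilon$, so that $\tfrac1n X^\top y=\Sigma_n\beta_0+\tfrac1n X^\top\varepsilon$, and then pass to a single coordinate. Choosing $j^\star$ with $|(\Sigma_n\beta_0)_{j^\star}|=\|\Sigma_n\beta_0\|_\infty$ gives
\[
\{E=\emptyset\}\ \subseteq\ \Bigl\{\ \bigl|(\Sigma_n\beta_0)_{j^\star}+\tfrac1n X_{j^\star}^\top\varepsilon\bigr|\le\lambda_{\mathrm L}\ \Bigr\}.
\]
Under Assumption~\ref{ass:normalization} one has $\|X_{j^\star}\|_2^2=n$, so $\tfrac1n X_{j^\star}^\top\varepsilon\sim\mathcal N(0,\sigma^2/n)$; writing it as $(\sigma/\sqrt n)W$ with $W\sim\mathcal N(0,1)$ and using that the distribution of $|a+W|$ depends on $a$ only through $|a|$, the probability of the single-coordinate event equals
\[
\Phi\!\Bigl(\tfrac{\lambda_{\mathrm L}-\|\Sigma_n\beta_0\|_\infty}{\sigma/\sqrt n}\Bigr)-\Phi\!\Bigl(\tfrac{-\lambda_{\mathrm L}-\|\Sigma_n\beta_0\|_\infty}{\sigma/\sqrt n}\Bigr),
\]
and discarding the nonnegative subtracted term yields the claimed bound.

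Every step here is elementary, so I do not anticipate a genuine obstacle. The only points requiring care are invoking uniqueness to move from $\{E=\emptyset\}$ to $\{\hat\beta_{\mathrm L}=0\}$, and handling the sign of $(\Sigma_n\beta_0)_{j^\star}$ correctly in the last computation — equivalently, symmetrizing the Gaussian so that the relevant mean may be taken to be $+\|\Sigma_n\beta_0\|_\infty$ without loss of generality.
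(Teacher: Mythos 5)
Your proposal is correct and follows essentially the same route as the paper's proof: reduce $\{E=\emptyset\}$ to $\{\|X^\top y/n\|_\infty\le\lambda_{\mathrm L}\}$ via the KKT conditions, pass to the single coordinate $j^\star$ maximizing $|(\Sigma_n\beta_0)_j|$, and use Gaussian symmetry so the mean may be taken to be $+\|\Sigma_n\beta_0\|_\infty$. The only cosmetic difference is the final step, where you compute the interval probability exactly and discard the nonnegative subtracted term, whereas the paper uses the inclusion $\{|Z+|m||\le\lambda_{\mathrm L}\}\subseteq\{Z\le\lambda_{\mathrm L}-|m|\}$; these yield the identical bound.
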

	
	\begin{proof}
		By the KKT conditions, \(\hat\beta_{\mathrm L}=0\) if and only if
		\begin{align*}
			\frac{1}{n}\,\|X^\top y\|_\infty \leq \lambda_{\mathrm L}.
		\end{align*}
		Hence
		\begin{align*}
			\mathbb P(E=\emptyset)
			&=
			\mathbb P(\hat\beta_{\mathrm L}=0) \\
			&=
			\mathbb P\!\left(
			\frac{1}{n}\,\|X^\top y\|_\infty \leq \lambda_{\mathrm L}
			\right).
		\end{align*}
		Let \(j^\star\) satisfy
		\begin{align*}
			|(\Sigma_n \beta_0)_{j^\star}|
			=
			\|\Sigma_n \beta_0\|_\infty .
		\end{align*}
		Then
		\begin{align*}
			\left\{
			\frac{1}{n}\,\|X^\top y\|_\infty \leq \lambda_{\mathrm L}
			\right\}
			\subseteq
			\left\{
			\frac{1}{n}\,|X_{j^\star}^\top y|
			\leq
			\lambda_{\mathrm L}
			\right\}.
		\end{align*}
		Write
		\begin{align*}
			m
			:=
			(\Sigma_n \beta_0)_{j^\star}
			=
			\frac{1}{n} X_{j^\star}^\top X \beta_0,
			\qquad
			|m|
			=
			\|\Sigma_n \beta_0\|_\infty .
		\end{align*}
		Using \(y=X\beta_0+\varepsilon\), we obtain
		\begin{align*}
			\mathbb P(E=\emptyset)
			\leq
			\mathbb P\!\left(
			\left|
			\frac{1}{n} X_{j^\star}^\top \varepsilon + m
			\right|
			\leq
			\lambda_{\mathrm L}
			\right).
		\end{align*}
		Let
		\begin{align*}
			Z
			:=
			\frac{1}{n} X_{j^\star}^\top \varepsilon .
		\end{align*}
		Since \(Z \sim N(0,\sigma^2/n)\) is symmetric about zero, we have
		\(Z \stackrel{d}{=} -Z\).
		Consequently,
		\begin{align*}
			\mathbb P(|Z+m|\leq \lambda_{\mathrm L})
			&=
			\mathbb P(|-Z+m|\leq \lambda_{\mathrm L}) \\
			&=
			\mathbb P(|Z-m|\leq \lambda_{\mathrm L}).
		\end{align*}
		If \(m\geq 0\), then \(|Z-m| = |Z-|m||\); if \(m<0\), then \(-m=|m|\) and
		\(|Z-m| = |Z+|m||\).
		In either case,
		\begin{align*}
			\mathbb P(|Z+m|\leq \lambda_{\mathrm L})
			=
			\mathbb P(|Z+|m||\leq \lambda_{\mathrm L}).
		\end{align*}
		Because \(|m|\geq 0\), the event
		\(\{|Z+|m||\leq \lambda_{\mathrm L}\}\)
		is contained in
		\(\{Z\leq \lambda_{\mathrm L}-|m|\}\),
		and therefore
		\begin{align*}
			\mathbb P(E=\emptyset)
			\leq
			\mathbb P\!\left(
			Z \leq \lambda_{\mathrm L}-|m|
			\right)
			=
			\Phi\!\left(
			\frac{\lambda_{\mathrm L}-\|\Sigma_n \beta_0\|_\infty}{\sigma/\sqrt n}
			\right).
		\end{align*}
	\end{proof}
	
	\begin{remark}\label{rem:eventE-empty-exp-decay}
		In particular, if \(\|\Sigma_n \beta_0\|_\infty \geq \lambda_{\mathrm L} + t\,\sigma/\sqrt n\)
		for some \(t>0\), then
		\[
		\mathbb P(E=\emptyset) \leq \Phi(-t),
		\]
		which decays exponentially in \(t^2\). Hence, under the usual scaling in which \(\lambda_{\mathrm L}\to 0\) while \(\|\Sigma_n \beta_0\|_\infty\) remains of constant order, the margin \(\|\Sigma_n \beta_0\|_\infty - \lambda_{\mathrm L}\) is bounded away from zero, and consequently the probability that the Lasso solution is exactly zero is exponentially small.
	\end{remark}

	\section{Auxiliary bounds for the stochastic term}\label{app:stochastic-term}
	
	\begin{lemma}[Expectation of Gaussian maxima]\label{lem:gaussian-infty-bound}
		Let \(\varepsilon \sim \mathcal{N}(0, \sigma^{2} I_n)\) and 
		\(X = [X_1, \ldots, X_p] \in \mathbb{R}^{n \times p}\) 
		with column norms \(\|X_j\|_2 \leq \sqrt{n}\) for all \(j\). Then
		\begin{align*}
			\frac{1}{n}\mathbb{E}[\|X^{\top}\varepsilon\|_{\infty}]
			\leq \sigma \sqrt{\frac{2\log(2p)}{n}}.
		\end{align*}
		For more details, see, for example, \citep[Section~17.4, p.~243]{geer2016estimation}.
	\end{lemma}
	
	\begin{remark}[Gaussian process interpretation]
		Define
		\(Z_j:=n^{-1}X_j^\top\varepsilon\), \(j=1,\dots,p\). Then
		\(\{Z_j\}_{j=1}^p\) is a centered Gaussian process indexed by \(I=\{1,\dots,p\}\), with canonical metric
		\[
		d(i,j)=(\mathbb E|Z_i-Z_j|^2)^{1/2}
		=\frac{\sigma}{n}\|X_i-X_j\|_2,
		\qquad i,j\in I.
		\]
		Moreover,
		\[
		n^{-1}\|X^\top\varepsilon\|_\infty=\sup_{j\in I}|Z_j|.
		\]
		Thus, the stochastic term may be analyzed using Gaussian process tools, with the sub-Gaussian increment structure determined by the metric geometry of the column set. In particular, this representation makes general tools such as the chaining bound stated later naturally applicable.
	\end{remark}
	
	\begin{lemma}[Two-step bound under clustering]\label{lem:two_step_bound}
		Let \(X = [X_1,\dots,X_p] \in \mathbb{R}^{n \times p}\). Assume that the index set \(\{1,\dots,p\}\) is partitioned into \(K\) clusters \(\{\mathcal C_\ell\}_{\ell=1}^K\), and for each cluster fix a representative vector \(v_\ell \in \mathbb{R}^n\) satisfying \(\|v_\ell\|_2 \leq \sqrt{n}\). Write \(v_{r(j)}\) for the representative of the cluster containing \(j\), and suppose the clusters are \(\delta\)-tight in the sense that
		\begin{align*}
			\|X_j - v_{r(j)}\|_2 \leq \delta \sqrt{n}, \qquad \forall j,
		\end{align*}
		for some \(\delta \in (0,1]\). Let \(\varepsilon \sim \mathcal N(0,\sigma^2 I_n)\). Then
		\begin{align*}
			\frac{1}{n}\mathbb{E}[\|X^{\top}\varepsilon\|_{\infty}]
			\leq
			\frac{\sigma}{\sqrt n}
			(\sqrt{2\log(2K)} + \delta \sqrt{2\log(2p)}).
		\end{align*}
		The proof is given in Appendix~\ref{appC}.
	\end{lemma}

	\begin{remark}[Subspace covering and induced clustering]
		\label{rem:subspace_covering}
		Suppose that the normalized columns \(\{X_j/\sqrt{n}\}_{j=1}^p\) lie in an \(r\)-dimensional subspace of \(\mathbb{R}^n\). Under our column normalization assumption, these points lie on the unit sphere \(S^{r-1}\) within that subspace.
		By a standard covering argument on this sphere (see, for example, \citep[Corollary 4.2.13]{vershynin2018high}), for any \(\delta \in (0,1]\), there exist vectors \(\{v_\ell\}_{\ell=1}^K\) in the subspace, each satisfying \(\|v_\ell\|_2=\sqrt n\), with \(K\leq (1+2/\delta)^r\), such that
		\[
		\|X_j-v_{r(j)}\|_2 \leq \delta\sqrt n,\qquad \forall j.
		\]
		This provides a structural condition under which the assumption of Lemma~\ref{lem:two_step_bound} holds with \(K \lesssim (1/\delta)^r\).
		
		Notably, exact low-dimensional containment of the columns is generally incompatible with Assumption~\ref{ass:general-position}. However, the same clustering conclusion continues to hold under approximate low-dimensional structure: if each column \(X_j\) admits a decomposition into a dominant component lying in an \(r\)-dimensional subspace and a small residual component orthogonal to that subspace, then the dominant part can be approximated by representatives as above, and the clustering bound still follows by the triangle inequality.
		
		This phenomenon is closely related to settings with highly correlated covariates, particularly when the design exhibits low-dimensional structure and irrelevant columns lie close to the span of relevant ones, as discussed in \cite{dalalyan2017prediction}. Our analysis complements empirical evidence favoring smaller tuning parameters.
	\end{remark}
	
	\begin{lemma}[Generic chaining bound \cite{talagrand2021upper}]
		Let \((T,d)\) be a metric space. Suppose that \(\{X_t:t\in T\}\) is a centered process whose increments are sub-Gaussian with respect to \(d\). Then there exists an absolute constant \(C>0\) such that
		\[
		\mathbb{E}\sup_{t\in T} X_t \leq C\,\gamma_2(T,d),
		\]
		where
		\[
		\gamma_2(T,d)=\inf_{\{T_k\}} \sup_{t\in T}\sum_{k\geq 0} 2^{k/2}\, d(t,T_k),
		\]
		the infimum being taken over all admissible sequences \(\{T_k\}\), that is, \(|T_0|=1\) and \(|T_k|\leq 2^{2^k}\) for all \(k\geq 1\). See also, for example, \cite{vershynin2018high} for further details on chaining methods, including Dudley's entropy integral inequality, which provides a closely related perspective.
	\end{lemma}
	
	\section{Proofs of Auxiliary Results}\label{appC}
	
	\subsection*{Proof of Theorem~\ref{thm:lasso-dmse-lower-bound}}
	\begin{proof}
		Recall from the proof of Theorem~\ref{thm:eventwise-delta-lower}, in particular from \eqref{eq:3}, that
		\begin{align}
			\Delta(\hat\beta_{\mathrm L},\hat\beta_{\mathrm R})
			=
			H_{E,s}(\lambda_{\mathrm L},\lambda_{\mathrm R})
			-\frac{2}{n}\langle X_E\hat\delta_E,\varepsilon\rangle,
			\label{eq:delta-decomposition}
		\end{align}
		where
		\begin{align*}
			H_{E,s}(\lambda_{\mathrm L},\lambda_{\mathrm R})
			:=
			2\lambda_{\mathrm L}\langle \hat\delta_E,s\rangle
			-\frac{1}{n}\langle X_E\hat\delta_E,X_E\hat\delta_E\rangle.
		\end{align*}
		Moreover, when \(E\neq\emptyset\), writing
		\[
		\Sigma_{n,E}=P\operatorname{diag}(\mu_1,\dots,\mu_{|E|})P^\top,
		\]
		with \(\mu_1\geq\cdots\geq\mu_{|E|}\geq0\), we have
		\begin{align*}
			H_{E,s}(\lambda_{\mathrm L},\lambda_{\mathrm R})
			&=
			\lambda_{\mathrm L}^2
			s^\top P
			\operatorname{diag}(
			\frac{\mu_1+2\lambda_{\mathrm R}}{(\mu_1+\lambda_{\mathrm R})^2},
			\dots,
			\frac{\mu_{|E|}+2\lambda_{\mathrm R}}{(\mu_{|E|}+\lambda_{\mathrm R})^2}
			)
			P^\top s.
		\end{align*}
		Notice that
		\begin{align*}
			\mu_1+\cdots+\mu_{|E|}
			&=
			\operatorname{trace}(\Sigma_{n,E})
			=
			|E|.
		\end{align*}
		Since the function \(x\mapsto (x+2a)/(x+a)^2\) is decreasing on \([0,\infty)\), and since \(\mu_i\le |E|\) for all \(i=1,\dots,|E|\), we have
		\begin{align*}
			\frac{\mu_i+2a}{(\mu_i+a)^2}
			&\geq
			\frac{|E|+2a}{(|E|+a)^2},
			\qquad i=1,\dots,|E|.
		\end{align*}
		Recall that we choose the tuning parameter \(\lambda_{\mathrm{R}} = c|E|\) where \(c >2\). Since \(s\) is a binary vector and \(P\) is orthogonal (hence norm-preserving), the following inequalities hold:
		\begin{align*}
			H_{E,s}(\lambda_{\mathrm{L}}, \lambda_{\mathrm{R}})
			&=\lambda_{\mathrm{L}}^2s^\top P \operatorname{diag}(\frac{\mu_1+2c|E|}{(\mu_1+c|E|)^2}, \dots, 	\frac{\mu_{|E|}+2c|E|}{(\mu_{|E|}+c|E|)^2}) P^\top s\\
			&\geq\lambda_{\mathrm{L}}^2s^\top P \operatorname{diag}(\frac{|E|+2c|E|}{(|E|+c|E|)^2}, \dots, 	\frac{|E|+2c|E|}{(|E|+c|E|)^2}) P^\top s\\
			&=\frac{\lambda_{\mathrm{L}}^2}{|E|}\frac{2c+1}{(c+1)^2}\|P^\top s\|_2^2\\
			&=\lambda_{\mathrm{L}}^2\frac{2c+1}{(c+1)^2}.
		\end{align*}
		Recall that the above inequality holds on the event \(\{E \neq \emptyset\}\). Since \(H_{E,s}(\lambda_{\mathrm{L}}, \lambda_{\mathrm{R}}) = 0\) on \(\{E = \emptyset\}\), we have
		\begin{align}
			\mathbb{E}[H_{E,s}(\lambda_{\mathrm{L}}, \lambda_{\mathrm{R}})]
			&= \mathbb{E}\big[H_{E,s}(\lambda_{\mathrm{L}}, \lambda_{\mathrm{R}})\,\mathbf{1}_{\{E \neq \emptyset\}} + H_{E,s}(\lambda_{\mathrm{L}}, \lambda_{\mathrm{R}})\,\mathbf{1}_{\{E = \emptyset\}}\big] \notag\\
			&= \mathbb{E}[H_{E,s}(\lambda_{\mathrm{L}}, \lambda_{\mathrm{R}})\,\mathbf{1}_{\{E \neq \emptyset\}}] \notag\\
			&\geq \lambda_{\mathrm{L}}^2\frac{2c+1}{(c+1)^2}\,\mathbb{P}(E \neq \emptyset).
			\label{eq:expected-H-lower-bound}
		\end{align}
		Next, we consider the last remaining term in equation~\eqref{eq:delta-decomposition}. From Lemma~\ref{lem:deltaE-l1-alt-form} in the appendix, we know the following equation holds when \(\lambda_{\mathrm{R}}>2|E|\):
		\begin{align*}
			\|\hat{\delta}_{E}\|_1=s^\top\hat{ \delta}_E.
		\end{align*}
		Since \(\|\hat{\delta}_{E}\|_1 = 0\) on \(\{E = \emptyset\}\), it is enough to consider the case \(E \neq \emptyset\) for the upper bound:
		\begin{align*}
			\|\hat{\delta}_{E}\|_1&=\frac{\lambda_{\mathrm{L}}}{\lambda_{\mathrm{R}}}s^\top P \operatorname{diag}(\frac{\lambda_{\mathrm{R}}}{\mu_1+\lambda_{\mathrm{R}}}, \dots, \frac{\lambda_{\mathrm{R}}}{\mu_{|E|}+\lambda_{\mathrm{R}}}) P^\top s\\
			&\leq\frac{\lambda_{\mathrm{L}}}{\lambda_{\mathrm{R}}}\frac{\lambda_{\mathrm{R}}}{\mu_{|E|}+\lambda_{\mathrm{R}}}\|P^\top s\|_2^2\\
			&\leq\frac{\lambda_{\mathrm{L}}}{\lambda_{\mathrm{R}}}\frac{\lambda_{\mathrm{R}}}{\mu_{|E|}+\lambda_{\mathrm{R}}}|E|\\
			&\leq\frac{\lambda_{\mathrm{L}}}{c}.
		\end{align*}
		Returning to the remaining term, the dual-norm inequality yields
		\begin{align}
			\mathbb{E}\langle X_E\hat{\delta}_{E}, \varepsilon\rangle
			&=\mathbb{E}\langle \hat{\delta}_{E}, X_E^{\top}\varepsilon\rangle \notag\\
			&\leq\mathbb{E}[\|\hat{\delta}_{E}\|_{1}\,\|X_E^{\top}\varepsilon\|_{\infty}] \notag\\
			&\leq\frac{\lambda_{\mathrm{L}}}{c}\mathbb{E}[\|X_E^{\top}\varepsilon\|_{\infty}].
			\label{eq:cross-term-upper-bound}
		\end{align}
		Next, we analyze the term \(\mathbb{E}[\|X_E^{\top}\varepsilon\|_{\infty}]\) in the following way:
		\begin{align*}
			\|X_E^{\top}\varepsilon\|_{\infty}
			&=\|X_E^{\top}\varepsilon\|_{\infty}\mathbf{1}_{\{E\subseteq T_0 \}}+\|X_E^{\top}\varepsilon\|_{\infty}\mathbf{1}_{\{E\not\subseteq T_0\}}\\
			&\leq\|X_{T_0}^{\top}\varepsilon\|_{\infty}\mathbf{1}_{\{E\subseteq T_0 \}}+\|X^{\top}\varepsilon\|_{\infty}\mathbf{1}_{\{E\not\subseteq T_0\}}\\
			&\leq\|X_{T_0}^{\top}\varepsilon\|_{\infty}+\|X^{\top}\varepsilon\|_{\infty}\mathbf{1}_{\{E\not\subseteq T_0\}}.
		\end{align*}
		By taking expectations and applying the Cauchy–Schwarz inequality, we obtain
		\begin{align*}
			\mathbb{E}[\|X_E^{\top}\varepsilon\|_{\infty}]
			&\leq\mathbb{E}[\|X_{T_0}^{\top}\varepsilon\|_{\infty}]+\mathbb{E}[\|X^{\top}\varepsilon\|_{\infty}\mathbf{1}_{\{E\not\subseteq T_0\}}]\\
			&\leq\mathbb{E}[\|X_{T_0}^{\top}\varepsilon\|_{\infty}]+\sqrt{\mathbb{E}[\|X^{\top}\varepsilon\|_{\infty}^2]}\sqrt{\mathbb{E}[\mathbf{1}_{\{E\not\subseteq T_0\}}^{2}]}\\
			&=\mathbb{E}[\|X_{T_0}^{\top}\varepsilon\|_{\infty}]+\sqrt{\mathbb{E}[\|X^{\top}\varepsilon\|_{\infty}^2]}\sqrt{\mathbb{P}(E\not\subseteq T_0)}.
		\end{align*}
		Since the inequality holds for every subset \(T_{0}\subseteq\{1,\dots,p\}\), we may take the infimum over all such \(T_{0}\) and derive
		\begin{align}
			\mathbb{E}[\|X_E^{\top}\varepsilon\|_{\infty}]
			&\leq\inf_{T_{0}\subseteq\{1,\dots,p\}}\Bigl\{\mathbb{E}[\|X_{T_0}^{\top}\varepsilon\|_{\infty}]+\sqrt{\mathbb{E}[\|X^{\top}\varepsilon\|_{\infty}^2]}\sqrt{\mathbb{P}(E\not\subseteq T_0)}\Bigr\}.
			\label{eq:localized-gaussian-complexity-bound}
		\end{align}
		Combining \eqref{eq:delta-decomposition},
		\eqref{eq:expected-H-lower-bound},
		\eqref{eq:cross-term-upper-bound}, and
		\eqref{eq:localized-gaussian-complexity-bound}, we obtain the following inequalities, which conclude the proof:
		\begin{align}
			\mathrm{DMSE}&=\mathbb{E}\Delta(\hat{\beta}_{\mathrm{L}},\hat{\beta}_{\mathrm{R}}) \notag\\
			&=\mathbb{E}H_{E,s}(\lambda_{\mathrm{L}}, \lambda_{\mathrm{R}})-\frac{2}{n}\mathbb{E}\langle X_E\hat{\delta}_{E}, \varepsilon \rangle \notag\\
			&\geq\lambda_{\mathrm{L}}^2\frac{2c+1}{(c+1)^2}\,\mathbb{P}(E\neq\emptyset)-\frac{2\lambda_{\mathrm{L}}}{cn}\mathbb{E}[\|X_E^{\top}\varepsilon\|_{\infty}] \label{eq:dmse-final-localized-lower-bound}\\
			&\geq\sup_{T_{0}\subseteq\{1,\dots,p\}}\Bigl\{\frac{2\lambda_{\mathrm{L}}}{c}\Bigl(\frac{c(2c+1)}{2(c+1)^2}\lambda_{\mathrm{L}}\mathbb{P}(E\neq\emptyset)
			-\frac{1}{n}\mathbb{E}[\|X_{T_0}^{\top}\varepsilon\|_{\infty}] \notag\\
			&\hspace{2em}-\frac{1}{n}\sqrt{\mathbb{E}[\|X^{\top}\varepsilon\|_{\infty}^2]}\sqrt{\mathbb{P}(E\not\subseteq T_0)}\Bigr)\Bigr\}. \notag
		\end{align}
	\end{proof}
	
	\subsection*{Proof of Lemma~\ref{lem:two_step_bound}}
	
	\begin{proof}
		Recall that \(\varepsilon\sim \mathcal N(0,\sigma^2 I_n)\) and, for each \(j\in\{1,\dots,p\}\),
		\begin{align*}
			Z_j = \frac{1}{n}X_j^\top \varepsilon.
		\end{align*}
		For each \(j\), decompose
		\begin{align*}
			|Z_j| = |\frac{1}{n}v_{r(j)}^\top \varepsilon + (Z_j - \frac{1}{n}v_{r(j)}^\top \varepsilon)| 
			\leq |\frac{1}{n}v_{r(j)}^\top \varepsilon| + |Z_j - \frac{1}{n}v_{r(j)}^\top \varepsilon|.
		\end{align*}
		Taking maxima and expectations,
		\begin{align*}
			\mathbb E\max_j |Z_j| 
			\leq \mathbb E\max_{\ell\leq K}|\frac{1}{n}v_\ell^\top \varepsilon| 
			+ \mathbb E\max_{j\leq p}|Z_j - \frac{1}{n}v_{r(j)}^\top \varepsilon|.
		\end{align*}
		First, each \(v_\ell^\top \varepsilon/n\) is centered Gaussian with variance
		\[
		\frac{\sigma^2}{n^2}\|v_\ell\|_2^2 \leq \frac{\sigma^2}{n}.
		\]
		Applying Lemma~\ref{lem:gaussian-infty-bound} with \(p\) replaced by \(K\), we obtain
		\begin{align*}
			\mathbb E\max_{\ell\leq K}|\frac{1}{n}v_\ell^\top \varepsilon| 
			\leq \frac{\sigma}{\sqrt n}\sqrt{2\log(2K)}.
		\end{align*}
		Second, for each \(j\),
		\begin{align*}
			Z_j - \frac{1}{n}v_{r(j)}^\top \varepsilon
			= \frac{1}{n}(X_j - v_{r(j)})^\top \varepsilon,
			\quad
			\operatorname{Var}(Z_j - \frac{1}{n}v_{r(j)}^\top \varepsilon)
			\leq \frac{\sigma^2}{n}\delta^2.
		\end{align*}
		Thus \(\{Z_j - v_{r(j)}^\top \varepsilon/n\}_{j=1}^p\) are centered Gaussian variables with variance at most \(\sigma^2\delta^2/n\).
		By Lemma~\ref{lem:gaussian-infty-bound},
		\begin{align*}
			\mathbb E\max_{j\leq p}|Z_j - \frac{1}{n}v_{r(j)}^\top \varepsilon|
			\leq \frac{\sigma}{\sqrt n}\,\delta\,\sqrt{2\log(2p)}.
		\end{align*}
		Combining the two bounds completes the proof.
	\end{proof}
	
	\subsection*{Proof of Theorem~\ref{thm:lasso-accurate-selection}}
	
	\begin{proof}
		We first recall the following expression for the DMSE, obtained in
		\eqref{eq:dmse-final-localized-lower-bound} in the proof of
		Theorem~\ref{thm:lasso-dmse-lower-bound}:
		\begin{align}
			\mathrm{DMSE}&=\mathbb{E}H_{E,s}(\lambda_{\mathrm{L}},\lambda_{\mathrm{R}})-\frac{2}{n}\mathbb{E}\langle X_E\hat{\delta}_{E},\varepsilon\rangle \notag\\
			&\geq\lambda_{\mathrm{L}}^2\frac{2c+1}{(c+1)^2}\mathbb{P}(E\neq\emptyset)-\frac{2}{n}\mathbb{E}\langle X_E\hat{\delta}_{E},\varepsilon\rangle.
			\label{eq:dmse-after-H-lower-bound}
		\end{align}
		Also, recall that according to Lemma~\ref{lem:kkt-closed-form}, we have the closed-form expression for the Lasso–Ridge estimator:
		\begin{align*}
			\hat{\delta}_{E}=\lambda_{\mathrm{L}}(\Sigma_{n,E}+ \lambda_{\mathrm{R}}I_{|E|})^{-1}s,
		\end{align*}
		which is determined once the values of \(s\) and \(|E|\) are fixed. We consider an alternative approach for the following term, with the tuning parameter \(\lambda_{\mathrm{R}}\) in the second step set to \(c|E|\):
		\begin{align*}
			\frac{1}{n}\mathbb{E}\langle X_E\hat{\delta}_{E},\varepsilon\rangle
			&=\frac{1}{n}\mathbb{E}[\langle X_E\hat{\delta}_{E},\varepsilon\rangle\mathbf{1}_{\{E = S_0\}}] + \frac{1}{n}\mathbb{E}[\langle X_E\hat{\delta}_{E}, \varepsilon\rangle\mathbf{1}_{\{E \ne S_0\}}]\\
			&=\frac{1}{n}\mathbb{E}[\langle \lambda_{\mathrm{L}}X_{S_0}(\Sigma_{n,S_0}+ c|S_0|I_{|S_0|})^{-1}s_0,\varepsilon\rangle\mathbf{1}_{\{E = S_0\}}]+\frac{1}{n}\mathbb{E}[\langle  X_E\hat{\delta}_{E}, \varepsilon\rangle\mathbf{1}_{\{E \ne S_0\}}],
		\end{align*}
		where we adopt the same notation for any index set. In particular, \(\Sigma_{n,S_0}\coloneqq X_{S_0}^\top X_{S_0}/n\), and
		\(s_0\coloneqq\operatorname{sign}(\beta_{0,S_0})\) denotes the (deterministic) sign vector on the event \(E=S_0\).
		Note that the vector \(\lambda_{\mathrm{L}}X_{S_0}(\Sigma_{n,S_0}+c|S_0|I_{|S_0|})^{-1}s_0\) is non-random; hence, by symmetry of the Gaussian noise \(\varepsilon\), we have
		\begin{align*}
			\frac{1}{n}\mathbb{E}\langle \lambda_{\mathrm{L}}X_{S_0}(\Sigma_{n,S_0}+ c|S_0|I_{|S_0|})^{-1}s_0,\varepsilon\rangle=0,
		\end{align*}
		Substituting this into the previous equation, we obtain
		\begin{align*}
			\frac{1}{n}\mathbb{E}\langle X_E\hat{\delta}_{E},\varepsilon\rangle
			&=\frac{1}{n}\mathbb{E}[\langle X_E\hat{\delta}_{E},\varepsilon\rangle\mathbf{1}_{\{E\ne S_0\}}]-\frac{1}{n}\mathbb{E}[\langle \lambda_{\mathrm{L}}X_{S_0}(\Sigma_{n,S_0}+ c|S_0|I_{|S_0|})^{-1}s_0,\varepsilon\rangle\mathbf{1}_{\{E\ne S_0\}}].
		\end{align*}
		First, we prove an upper bound of \(\|X_E\hat{\delta}_{E}\|_2^2\). Since \(\|X_E\hat{\delta}_{E}\|_2^2 = 0\) on \(\{E = \emptyset\}\), it is enough to consider the case \(E\neq\emptyset\), where we have the following orthogonal decomposition based on the closed-form expression of the Lasso–Ridge estimator in Lemma~\ref{lem:kkt-closed-form}:
		\begin{align*}
			\frac{1}{n}\|X_E\hat{\delta}_{E}\|_2^2
			&=\frac{1}{n}\langle X_E\hat{\delta}_{E},X_E\hat{\delta}_{E} \rangle\\
			&=\lambda_{\mathrm{L}}^2s^\top(\Sigma_{n,E}+ \lambda_{\mathrm{R}}I_{|E|})^{-1}\Sigma_{n,E}(\Sigma_{n,E}+ \lambda_{\mathrm{R}}I_{|E|})^{-1}s\\
			&=\lambda_{\mathrm{L}}^2s^\top P \operatorname{diag}(\frac{\mu_1}{(\mu_1+\lambda_{\mathrm{R}})^2}, \dots, \frac{\mu_{|E|}}{(\mu_{|E|}+\lambda_{\mathrm{R}})^2})P^\top s,     
		\end{align*}
		where \( \mu_1 \geq \mu_2 \geq \cdots \geq \mu_{|E|}\geq 0 \) denote the ordered eigenvalues of \(\Sigma_{n,E}=X_E^\top X_E/n\) and \(P\) is the corresponding eigenvector matrix. Since the function \(x\mapsto x/(x+a)^2\) attains its maximum at \(x=a\), we have, with \(a=c|E|\),
		\begin{align*}
			\frac{\mu_i}{(\mu_i+c|E|)^2}
			\leq
			\frac{c|E|}{(c|E|+c|E|)^2},
			\qquad i=1,\dots,|E|.
		\end{align*}
		Based on this, and recalling that we choose the tuning parameter \(\lambda_{\mathrm{R}} = c|E|\), we derive
		\begin{align}
			\frac{1}{n}\langle X_E\hat{\delta}_{E},X_E\hat{\delta}_{E} \rangle
			&=\lambda_{\mathrm{L}}^2s^\top P \operatorname{diag}(\frac{\mu_1}{(\mu_1+c|E|)^2}, \dots, \frac{\mu_{|E|}}{(\mu_{|E|}+c|E|)^2})P^\top s \notag\\
			&\leq\lambda_{\mathrm{L}}^2s^\top P \operatorname{diag}(\frac{c|E|}{(c|E|+c|E|)^2}, \dots, \frac{c|E|}{( c|E|+c|E|)^2})P^\top s \notag\\
			&\leq\lambda_{\mathrm{L}}^2s^\top P \operatorname{diag}(\frac{ c|E|}{(2c|E|)^2}, \dots, \frac{ c|E|}{(2c|E|)^2})P^\top s \notag\\
			&\leq\frac{\lambda_{\mathrm{L}}^2}{4c|E|}\|P^\top s\|_2^2 \notag\\
			&\leq\frac{\lambda_{\mathrm{L}}^2}{4c}.
			\label{eq:quadratic-delta-upper-bound}
		\end{align}
		Also note that, since \(\|\varepsilon\|_2^2 / \sigma^2 \sim \chi^2_n\), we have
		\begin{align*}
			\mathbb{E}\!\left[\|\varepsilon\|_2^2\right] = n\sigma^2.
		\end{align*}
		Now returning to our target, we have
		\begin{align}
			\frac{1}{n}\mathbb{E}[\langle  X_E\hat{\delta}_{E},\varepsilon\rangle\mathbf{1}_{\{E\ne S_0\}}]
			&\leq\frac{1}{n}\sqrt{\mathbb{E}[\langle  X_E\hat{\delta}_{E},\varepsilon\rangle^2]}\sqrt{\mathbb{E}[\mathbf{1}_{\{E\ne S_0\}}^{2}]} \notag\\
			&=\frac{1}{n}\sqrt{\mathbb{E}[\langle X_E\hat{\delta}_{E},\varepsilon\rangle^2]}\sqrt{\mathbb{P}(E\ne S_0)} \notag\\
			&\leq\frac{1}{n}\sqrt{\mathbb{E}[\|X_E\hat{\delta}_{E}\|_2^2\,\|\varepsilon\|_2^2]}\sqrt{\mathbb{P}(E\ne S_0)} \notag\\
			&\leq\sqrt{\frac{\lambda_{\mathrm{L}}^2}{4c}\mathbb{E}[\frac{1}{n}\|\varepsilon\|_2^2]}\sqrt{\mathbb{P}(E\ne S_0)} \notag \\
			&\leq\frac{\sigma\lambda_{\mathrm{L}}}{2\sqrt{c}}\sqrt{\mathbb{P}(E\ne S_0)}.
			\label{eq:false-selection-cross-term-bound}
		\end{align}
		Next, we derive an upper bound for
		\begin{align*}
			\frac{1}{n}\mathbb{E}[\langle \lambda_{\mathrm{L}}X_{S_0}(\Sigma_{n,S_0}+ c|S_0|I_{|S_0|})^{-1}s_0,\varepsilon\rangle\mathbf{1}_{\{E\ne S_0\}}]
		\end{align*}
		using a similar approach. First, note that on the event \(E = S_0\), we have
		\begin{align*}
			\frac{1}{n}\langle X_{E}\hat{\delta}_{E}, X_{E}\hat{\delta}_{E} \rangle
			= \frac{1}{n}\|\lambda_{\mathrm{L}} X_{S_0}(\Sigma_{n,S_0}+ c|S_0|I_{|S_0|})^{-1}s_0\|_2^2 .
		\end{align*}
		Following an argument similar to that used to derive
		\eqref{eq:quadratic-delta-upper-bound}, we obtain
		\begin{align*}
			\frac{1}{n}\|\lambda_{\mathrm{L}}X_{S_0}(\Sigma_{n,S_0}+ c|S_0|I_{|S_0|})^{-1}s_0\|_2^2\leq\frac{\lambda_{\mathrm{L}}^2}{4c}.
		\end{align*}
		Since \(\langle\lambda_{\mathrm{L}}X_{S_0}(\Sigma_{n,S_0}+ c|S_0|I_{|S_0|})^{-1}s_0,\varepsilon\rangle\) is a Gaussian random variable, we have
		\begin{align*}
			\mathbb{E}[\langle\lambda_{\mathrm{L}}X_{S_0}(\Sigma_{n,S_0}+ c|S_0|I_{|S_0|})^{-1}s_0,\varepsilon\rangle^2]
			&=\sigma^2\|\lambda_{\mathrm{L}}X_{S_0}(\Sigma_{n,S_0}+ c|S_0|I_{|S_0|})^{-1}s_0\|_2^2.
		\end{align*}
		Thus for the second term of our final target, we derive
		\begin{align}
			&\frac{1}{n}\mathbb{E}[\langle\lambda_{\mathrm{L}}X_{S_0}(\Sigma_{n,S_0}+ c|S_0|I_{|S_0|})^{-1}s_0,\varepsilon\rangle\mathbf{1}_{\{E \ne S_0\}}] \notag\\
			&\leq\frac{1}{n}\sqrt{\mathbb{E}[\langle\lambda_{\mathrm{L}}X_{S_0}(\Sigma_{n,S_0}+ c|S_0|I_{|S_0|})^{-1}s_0,\varepsilon\rangle^2]}\sqrt{\mathbb{E}[\mathbf{1}_{\{E\ne S_0\}}^{2}]} \notag\\
			&\leq\frac{1}{n}\sqrt{\sigma^2\|\lambda_{\mathrm{L}}X_{S_0}(\Sigma_{n,S_0}+ c|S_0|I_{|S_0|})^{-1}s_0\|_2^2}\sqrt{\mathbb{P}(E\ne S_0)} \notag\\
			&\leq\frac{\sigma}{\sqrt{n}}\sqrt{\frac{1}{n}\|\lambda_{\mathrm{L}}X_{S_0}(\Sigma_{n,S_0}+ c|S_0|I_{|S_0|})^{-1}s_0\|_2^2}\sqrt{\mathbb{P}(E\ne S_0)} \notag\\
			&\leq\frac{\sigma\lambda_{\mathrm{L}}}{2\sqrt{cn}}\sqrt{\mathbb{P}(E\ne S_0)}.
			\label{eq:true-support-cross-term-misselection-bound}
		\end{align}
		Combining \eqref{eq:false-selection-cross-term-bound} and
		\eqref{eq:true-support-cross-term-misselection-bound}, we have
		\begin{align*}
			\frac{1}{n}\mathbb{E}\langle  X_E\hat{\delta}_{E},\varepsilon\rangle
			&=\frac{1}{n}\mathbb{E}[\langle  X_E\hat{\delta}_{E},\varepsilon\rangle\mathbf{1}_{\{E\ne S_0\}}]-\frac{1}{n}\mathbb{E}[\langle \lambda_{\mathrm{L}}X_{S_0}(\Sigma_{n,S_0}+ c|S_0|I_{|S_0|})^{-1}s_0,\varepsilon\rangle\mathbf{1}_{\{E\ne S_0\}}]\\
			&\leq\frac{\sigma\lambda_{\mathrm{L}}}{2\sqrt{c}}\sqrt{\mathbb{P}(E\ne S_0)}+\frac{\sigma\lambda_{\mathrm{L}}}{2\sqrt{cn}}\sqrt{\mathbb{P}(E\ne S_0)}\\
			&\leq\frac{\sigma\lambda_{\mathrm{L}}}{2\sqrt{c}}(1+\frac{1}{\sqrt{n}})\sqrt{\mathbb{P}(E\ne S_0)}.
		\end{align*}
		Substituting this into \eqref{eq:dmse-after-H-lower-bound}, we obtain
		\begin{align*}
			\mathrm{DMSE}&=\mathbb{E}H_{E,s}(\lambda_{\mathrm{L}},\lambda_{\mathrm{R}})-\frac{2}{n}\mathbb{E}\langle X_E\hat{\delta}_{E}, \varepsilon\rangle.\\
			&\geq\lambda_{\mathrm{L}}^2\frac{2c+1}{(c+1)^2}\mathbb{P}(E\neq \emptyset)-\frac{\sigma\lambda_{\mathrm{L}}}{\sqrt{c}}(1+\frac{1}{\sqrt{n}})\sqrt{\mathbb{P}(E\ne S_0)}\\
			&=\frac{\lambda_{\mathrm{L}}}{\sqrt{c}}(\frac{\sqrt{c}(2c+1)}{(c+1)^2}\lambda_{\mathrm{L}}\mathbb{P}(E\neq \emptyset)-\sigma(1+\frac{1}{\sqrt{n}})\sqrt{\mathbb{P}(E\ne S_0)}).
		\end{align*}
	\end{proof}
	\begin{remark}
		Consider the function
		\begin{align*}
			f(x)=\frac{\sqrt{x}(2x+1)}{(x+1)^2},\qquad x\geq 0.
		\end{align*}
		A direct calculation shows that \(f\) is increasing on 
		\([0,(3+\sqrt{17})/4]\), decreasing thereafter, and attains its maximum 
		value \(f_{\max}\approx 0.79\) at \(x=(3+\sqrt{17})/4\approx 1.78\).
	\end{remark}

	\section*{Acknowledgements}
	
	The author gratefully acknowledges Professor Yoichi Nishiyama for his valuable guidance, insightful comments, and technical assistance.

\bibliographystyle{plainnat}
\bibliography{references}
\end{document}